\renewcommand{\maketitle}{\bgroup\setlength{\parindent}{0pt}
\begin{flushleft}
  \textbf{\@title}

  \@author
\end{flushleft}\egroup
}
\newtheorem{theorem}{Theorem}
\newtheorem{lemma}{Lemma}
 \newtheorem{definition}{Definition}
 \newtheorem{Assumption}{Assumption}
\newcommand{\proof}{\par \noindent \bf Proof: \hspace{0mm} \rm }
\newcommand{\qed}{\hspace*{\fill} $\Box$ \vspace{2ex}}
\def\btheo{\begin{theorem}}
\def\etheo{\end{theorem}}
\def\bprop{\begin{proposition}}
\def\eprop{\end{proposition}}
\def\bexam{\begin{example}}
\def\eexam{\end{example}}
\def\bdefi{\begin{definition}}
\def\edefi{\end{definition}}
\def\blemm{\begin{lemma}}
\def\elemm{\end{lemma}}
\newcommand{\mbb}{\mathbb}
\newcommand{\mcal}{\mathcal}
\newcommand{\sta}{\textrm{sta}}
\def\inv{{-1}}
\def\det{\textrm{det}}
\def\diag{\textrm{diag}}
\def\exp{\textrm{exp}}
\def\rank{\textrm{rank}}
\def\non{\nonumber}
\def\wand{\textrm{ and }}
\def\wotherwise{\textrm{otherwise}}
\def\la{\langle}
\def\ra{\rangle}
\def\[#1\]{\begin{align}#1\end{align}}
\def\bcase{\begin{cases}}
\def\ecase{\end{cases}}
\def\bpmat{\begin{pmatrix}}
\def\epmat{\end{pmatrix}}
\def\bbmat{\begin{bmatrix}}
\def\ebmat{\end{bmatrix}}
\def\beqn{\begin{eqnarray}}
\def\eeqn{\end{eqnarray}}
\def\beqnx{\begin{eqnarray*}}
\def\eeqnx{\end{eqnarray*}}
\def\beq{\begin{equation}}
\def\eeq{\end{equation}}
\def\bitem{\begin{itemize}}
\def\eitem{\end{itemize}}
\def\btheo{\begin{theorem}}
\def\etheo{\end{theorem}}
\def\bblock{\begin{block}}
\def\eblock{\end{block}}
\def\benum{\begin{enumerate}}
\def\eenum{\end{enumerate}}
\def\VV{V}
\def\bb{f}
\def\e{\epsilon}
\def\vP{\varPi}
\def\vX{\varXi}
\def\bbR{\mathbb{R}}
\def\calE{\mathcal{E}}
\def\calP{\mathcal{P}}
\def\calS{\mathcal{S}}
\def\calX{\mathcal{X}}
\def\calY{\mathcal{Y}}
\def\lpa{\left(}
\def\rpa{\right)}
\def\lbc{\left\{}
\def\rbc{\right\}}
\def\half{\frac{1}{2}}
\begin{document}

 \thispagestyle{firststyle}

\title{\bf\Large On modeling and global solutions for   d.c. optimization problems by canonical duality theory}
\maketitle

\vspace{3em}

\noindent{{\bf Zhong Jin $\cdot$ David Y Gao}}

\vspace{3em}

\noindent{\textbf{Abstract}}~
This paper presents a canonical  d.c.  (difference of canonical and convex functions) programming problem, which can be used to model general global optimization problems in complex systems.
It shows that by using the canonical duality theory,  a large class of  nonconvex minimization problems can be equivalently converted to a unified concave maximization problem over a convex domain, which can be solved easily under certain conditions. Additionally, a detailed proof for  triality theory is provided, which can be used to identify local extremal solutions.
Applications are illustrated  and open problems are presented.

\vspace{2em}

\noindent{\textbf{Keywords}} ~Global optimization $\cdot$ Canonical duality theory $\cdot$ DC programming
 $\cdot$ Mathematical modeling
\vspace{2em}

\noindent{\textbf{Mathematics Subject Classification}}~~90C26,  90C30, 90C46

\vspace{3em}

\textheight=24cm

\section{Problems and motivation}\label{se:intro}
It is known that in Euclidean space  every continuous global optimization problem on a compact set can be reformulated as a d.c. optimization problem, i.e.
a nonconvex problem which can be described in terms of {\em d.c. functions} (difference of convex functions)  and {\em d.c. sets} (difference of convex sets) \cite{tuy95}. By the fact that any constraint  set can be equivalently relaxed by a  nonsmooth  indicator function,
  general nonconvex optimization  problems can be written in the following  standard d.c. programming form
\begin{equation}\label{dc}
 \min \{ f(x)= g(x)-h(x) \; | \;\; \forall  {x\in {\cal X}}  \},
\end{equation}
where   ${\cal X} = \bbR^n$, $g(x), h(x) $ are convex  proper  lower-semicontinuous functions on $\bbR^n$, and the d.c.  function
$f(x)$  to be optimized is usually called the ``objective function'' in mathematical optimization.
A more general model is that $g(x)$ can be  an arbitrary function  \cite{tuy95}.
Clearly,   this d.c. programming problem is artificial. Although  it  can be used to ``model'' a very wide range of mathematical problems \cite{hiri} and has been studied extensively during the last thirty years (cf. \cite{hor-tho,tao-le,s-s}),
  it comes at a price: it is impossible to have an elegant theory and powerful algorithms for solving this
problem without detailed structures on these arbitrarily given functions.
As the result,   even some very simple d.c. programming problems are considered as NP-hard \cite{tuy95}.
This dilemma is mainly due to the existing gap between mathematical optimization and mathematical physics.

\subsection{Objectivity and multi-scale modeling}
 Generally speaking, the concept of {\em objectivity} used in our daily life means the state or quality of being true even outside of a subject's individual biases, interpretations, feelings, and imaginings (see Wikipedia at \url{https://en.wikipedia.org/wiki/Objectivity\_(philosophy)}). In science, the objectivity is often attributed to the property of scientific measurement, as the accuracy of a measurement can be tested independent from the individual scientist who first reports it, i.e.   an objective function does not depend on observers.
In Lagrange mechanics and continuum physics,   a  real-valued  function $W:\calX \rightarrow \bbR $
  is said to be  objective 
 if and only if   (see \cite{Gao00duality}, Chapter 6)
 \[
 W(x) = W(Rx)  \;\; \forall x \in  \calX, \;\;  \forall R \in  {\cal R},
 \]
where ${\cal R}$ is a special rotation group such that
$R^{-1} = R^T, \;\; \det R = 1,  \;\; \forall R \in {\cal R}$.

Geometrically, an objective function does not depend on the rotation, but only on certain measure
of its variable. The simplest measure  in $\bbR^n$  is the $\ell_2$ norm $\| x \|$, which is an objective function since
$\| R x \|^2 = (R x)^T (Rx) = x^T R^T R x= \|x\|^2$ for all special orthogonal matrix $R \in SO(n)$. By Cholesky factorization, any positive definite matrix
has a unique decomposition $C = D^* D$. Thus, any convex quadratic function is objective.
 It was emphasized by P.G. Ciarlet in his recent nonlinear analysis  book \cite{ciarlet}  that the objectivity
 is not an assumption, but an axiom. Indeed, the objectivity is also known as
the {\em axiom of  frame-invariance} in continuum physics (see page 8 \cite{marsd-hugh} and page 42 \cite{tru-noll}).
 Although the objectivity has been well-defined in mathematical physics,
it  is still subjected to seriously study   due to its importance in mathematical modeling (see \cite{liu,murd,murd05}).

Based on the original concept of objectivity, a  multi-scale mathematical model  for general nonconvex systems
was proposed by Gao  in \cite{Gao00duality,gao-opl16}:
\begin{equation}\label{p:gao}
(\mathcal{P}):~~~~\inf \{ \Pi(x)= W(Dx) - F(x)  \; | \;\; \forall  {x \in {\cal X} }   \},
\end{equation}
where  $\calX $ is a feasible space;
$F: {\cal X} \rightarrow \bbR \cup \{ -\infty\}$ is  a so-called {\em subjective function},
   which is  linear  on its effective domain $\calX_a \subset \calX$, wherein,
  certain ``geometrical  constraints'' (such as boundary/initial conditions, etc) are given;
correspondingly,   $W: {\cal Y}  \rightarrow \bbR\cup \{ \infty\} $ is  an
{\em objective function} on its effective domain $\calY_a \subset \calY$,
in which, certain physical constraints (such as constitutive laws, etc) are given;
$D: {\cal X}  \rightarrow {\cal Y} $ is a linear operator which assign each decision variable
in configuration space ${\cal X}$ to an internal variable $y \in {\cal Y} $ at different scale.
  By Riesz representation theorem,  the subjective function
 can be written as
  $F(x) = \la x, \bar{x}^* \ra \;\; \forall x \in \calX_a$,
  where $\bar{x}^* \in \calX^*$ is a given input (or source),
the bilinear form $\la x, x^* \ra :\calX \times \calX^* \rightarrow \bbR$ puts $\calX $ and $\calX^*$ in duality.
Additionally, the positivity conditions $W(y) \ge 0 \;\;\forall y \in \calY_a $, $F(x) \ge 0 \;\;\forall x \in \calX_a$
and coercivity condition $\lim_{\|y\| \rightarrow \infty} W(y) = \infty$
are needed for the target function $\Pi(x)$ to be  bounded below on
its effective domain $\calX_c = \{ x \in \calX_a | \;\; Dx \in \calY_a\}$ \cite{gao-opl16}.
Therefore, the extremality condition $  0 \in \partial \Pi(x)  $ leads to the equilibrium equation  \cite{Gao00duality}
\[
0 \in   D^* \partial W(Dx) - \partial F (x)  \;\; \Leftrightarrow \;\;   D^* y^* - x^* = 0  \;\; \forall  x^* \in  \partial F(x), \;\; y^* \in  \partial W(y).
\]

In this model, the  objective  duality relation  $y^* \in \partial W(y)$ is governed by the constitutive law, which depends only
on mathematical modeling of the system;
  the subjective duality relation $x^* \in \partial F(x)$ leads to the  input   $\bar{x}^* $ of the system,
 which depends only on each given problem.
Thus,   $(\mathcal{P})$ can be used to model general problems in multi-scale complex systems.

\subsection{Real-world problems}
In management science the  variable  $x \in \calX_a \subset \bbR^n$ could  represent  the products of a manufacture company.
  Its dual variable $\bar{x}^* \in \bbR^n$ can be considered as market price (or demands). Therefore, the subjective function
  $ F(x) = x^T \bar{x}^* $ in this example is  the total income of the company.
The products are produced by workers $y  \in \bbR^m$. Due to the cooperation, we have $y =  D x$ and
$ D \in \bbR^{m\times n}$ is a matrix. Workers are paid by salary $y^*= \partial  W(y)$,
therefore, the objective function  $ W(y)$ in this example is the cost.
Thus, $ \Pi(x) = W( D x) -  F(x)$ is the {\em total loss  or target}  and the minimization problem
 $({\cal P})$ leads to
the equilibrium equation
$ D^T \partial_{y} W( Dx) = \bar{x}^*$.
The cost function  $ W( y)$ could be  convex for a very small company, but
usually nonconvex for big companies.

In Lagrange mechanics, the variable $x \in \calX = {\cal C}^1[I; \bbR^n]$ is a continuous  vector-valued function of time $t \in I \subset \bbR$, its components $\{x_i (t) \} (i=1, \dots, n) $
are  known as the  Lagrange coordinates.
 The subjective function in this case
 is a linear functional $ F(x) =   \int_I  x(t)^T \bar{x}^*(t)  dt$, where $ \bar{x}^*(t) $ is a given external force field.
  While  $ W( D x)$ is the so-called   action:
  \[
  W( D x) = \int_I L(x, \dot{ x} ) dt , \;\; L=  T(\dot{x}  ) - V(x),
  \]
  where $ T $ is the kinetic energy density,  $ V $ is the potential density, and $L=  T -  V$ is the standard
  {\em Lagrangian density} \cite{l-l}.
  The linear operator $ D x  = \{ \partial_t, 1 \} x = \{ \dot{x}, \; x\}$ is a vector-valued mapping.
  The kinetic energy $T $  must be an objective function of the velocity (quadratic for Newton's mechanics and convex for  Einstein's relativistic theory) \cite{Gao00duality},
  while the potential density $ V$ could be either convex or nonconvex,
  depending  on each problem.
   Together, $ \Pi(x) =  W( D x) -  F(x)$   is called {\em total action}.
   The extremality condition $\partial \Pi(x) = 0$ leads to the well-known Euler-Lagrange equation
   \[
 D^* \partial W(D x) = \partial^*_t  \frac{d  T(\dot{x}) }{d \dot{x} }- \frac{d V(x) }{d x} = \bar{x}^* ,
   \]
   where $\partial^*_t $ is an adjoint operator of $\partial_t$.
For convex Hamiltonian systems, both  $T$ and $V$
  are convex, thus, the least  action principle leads to a typical  d.c. minimization  problem
\[
\inf  \{\Pi(x) = K(\partial_t x ) - P(x) \} , \;\;   K(y) = \int_I T(y) dt , \;\; P(x) = \int_I  [ V(x)  + x^T \bar{x}^* ] dt ,
\]
where $K(y)$ is the kinetic  energy and $P(x)$ is the total potential energy.
The duality theory for this d.c. minimization problem was first studied by J. Toland  \cite{toland} with successful application in nonlinear
heavy rotating chain, where
\[
K(y) = \int_0^1 \frac{1}{2 \lambda} y^2 dt , \;\; P(x) = \int_0^1 [ x(t) ^2  + t^2 ]^{{1}/{2}} d t,
\]
and the parameter  $\lambda > 0$ depends on angular speed. Clearly, in this application, $K(v)$ is quadratic while $P(x)$ is approaching to linear when $\|x (t) \|$ is sufficiently large. Therefore, the total action $\Pi(x)$ is bounded below and coercive
on $\calX = \{ x(t) \in W^{1,2}[0,1] | \;\; x(0) = 0\}$, the problem $({\cal P})$
has  a unique stable global minimizer.

However,  if  both $K(v)$ and $P(x)$ are quadratic functions (for example,  the classical linear mass-springer system),
the  d.c. minimal problem  $({\cal P})$  will have no stable global minimizer.
It was proved in \cite{Gao00duality}  that, in addition to the double-min duality
\[
\inf_{x \in \calX} \{  \Pi(x) = K(\partial_t x) - P(x) \} = \inf_{y^* \in {\cal Y}^*}   \{ \Pi^*(y^*)
= P^*(\partial^*_t y^*) - K^*(y^*) \} ,
\]
the double-max duality
\[
\sup_{x \in \calX} \{  \Pi(x) = K(\partial_t x) - P(x) \} = \sup_{y^* \in {\cal Y}^*}  \{ \Pi^*(y^*) = P^*(\partial^*_t y^*) - K^*(y^*) \}
\]
holds alternatively, i.e.  the system is in stable periodic vibration on its time domain $I$.
Therefore, this double-min duality reveals an important truth in convex Hamiltonian systems:
 the least action principle is a misnomer for periodic vibration (see Chapter 2 \cite{Gao00duality}).

Now let us consider  another example in  buckling analysis of Euler beam:
\[
\inf  \{ \Pi(u) = K(\partial_{xx} u ) - P(\partial_x u) \},
\]
where both   the bending energy $K$ and the axial strain energy $P$ are quadratic
\[
K(\partial_{xx} u ) = \int_I \frac{1}{2} \alpha u_{xx}^2 d x , \;\; P(\partial_x u) =  \int_I \frac{1}{2}  \lambda u_{x}^2 d x
\]
and $\alpha > 0$ is a constant, $\lambda > 0$ is a given axial load at the end of  the  beam.
Clearly, if $\lambda < \lambda_c$, the eigenvalue of the Euler beam defined by
\[
\lambda_c = \inf \frac{\int_I  \alpha  u_{xx}^2 d x }{\int_I u_x^2 dx},
\]
the d.c.  functional $\Pi(u)$ is convex and the problem $({\cal P})$ has a unique solution.
In this case, the Euler beam is in pre-buckling state.
However,   $\Pi(u)$ is concave if the axial load $\lambda > \lambda_c$ and in this case, we have $\inf \Pi(u) = - \infty$, which means that  the Euler beam is collapsed.
This example shows that  the linear  Euler beam can be used only for pre-buckling problems.
Generally speaking, unconstrained quadratic d.c. programming problem  does not make any physical sense unless it is convex.

In order to study the post-bifurcation problems,
 a nonlinear beam model was proposed by Gao \cite{gao-mrc96}. Instead of quadratic function, the stored energy
 $K$ in this nonlinear model  is a fourth-order polynomial
\beq
K(\partial_{xx} u ) = \int_I\left[ \frac{1}{2} \alpha  u_{xx}^2  + \frac{1}{12} \beta u_{x}^4 \right] d x ,
\eeq
 where $\beta > 0$ is a material constant.  Clearly, if  $\lambda_p = \lambda - \lambda_c < 0$,
  $\Pi(u) = K(\partial_{xx} u) - P(\partial_x u) $
 is strictly convex.  In this case, the problem $(\calP)$ has a unique minimizer and
 the beam is in  pre-buckling state.
 If $\lambda_p > 0$,  the total potential $\Pi(u)$ is nonconvex which has two equally valued local minimizers and one local maximizer.
 Therefore, this nonlinear beam  can be used  to model
post-buckling phenomena, which has been subjected to seriously  study in recent years
(cf.  \cite{ahn,cai-gao-qin,kuttler-etal}).
If the beam is subjected a lateral distributed load $q(x)$,
then we have the following d.c  variational problem
\beq
\inf \{ \Pi(u) = W(Du) - F(u)\} ,
\eeq
where
\[
   W(Du)  = \int_I \left[   \frac{1}{12} \beta  u_{x}^4  - \frac{1}{2}  \lambda_p u_{x}^2  \right]d x ,\;\;
F(u) = \int_I   q  u \;   d x .
\]
The objective function $W(Du) $ in this problem is the so-called {\em double-well  potential}, which
 appears extensively in real-world problems, such as phase transitions, shape-memory alloys, chaotic dynamics and theoretical physics \cite{gao-amma03}.
The subjective function $F(u)$  breaks the symmetry of this nonlinear buckling beam model
 and leads to one global minimizer,  corresponding to a stable buckled state,
  one local minimizer, corresponding to one unstable buckled state,
and one local maximizer, corresponding to one unbuckled state  \cite{cai-gao-qin,santos-gao}.
By  finite element method  the domain $I$ is discretized into $m$-elements $\{ I_ e\}$ such that
 the unknown function can be
 piecewisely  approximated as  $ u (x) \simeq N_e(x) p_e  $ in each element $ I_e $ with $p_e$ as  nodal variables.
 Then, the nonconvex variational problem (15)
 can be numerically reformulated to  the
  d.c. programming problem (1)  with  $g(p)$  as a fourth-order polynomial and  $h(p)$ a quadratic function so that
   $\Pi(p) = g(p) - h(p)$ is bounded below to  have a global minimum solution \cite{cai-gao-qin,santos-gao}.

All the  real-world applications discussed in this section  show a simple fact, i.e.
 the  functions $g(x)$  and $h(x)$ in the
standard d.c. programming problem (\ref{dc}) can't be arbitrarily given, they
must obey certain fundamental laws in  physics   in order to  model real-world systems.
By the facts that  the subjective function $F(x) = \la x , \bar{x}^* \ra $
is necessary for any given real-world   system  in order to have non-trivial solutions (states or outputs) and the
 function $g(x)$ in the standard d.c. programming (\ref{dc}) can  be generalized to a nonconvex function (see Equation (36)
 in \cite{tuy95}), it is reasonable
 to assume that $g(x)$ in  (\ref{dc}) is a general nonconvex function $W(Dx)$ and
   $h(x)$  is a quadratic function
 \[
 Q(x) = \frac{1}{2}  \la  x ,  C x  \ra +  \la x,  \bb \ra ,
 \]
 where  $C: \calX \rightarrow \calX^*$ is a given symmetrical positive definite operator (or matrix)
  and   $\bb \in \calX^*$
 is a given input. Then, the  standard d.c. programming (\ref{dc}) can be generalized to the following form
 \[
 (\calP_{dc}): \;\; \min \{W(Dx) - Q(x) \;| \;\; \forall x \in\calX \}.
 \]

\subsection{Canonical duality theory and goal}
Canonical duality-triality is a  breakthrough  theory  which can be used not only for modeling complex systems within a unified framework, but also for solving real-world  problems with a unified methodology \cite{gao-opl16}.
This theory was developed  originally from Gao and Strang's work in nonconvex mechanics
 \cite{gao-strang1989} and has been applied successfully for solving a large class of challenging problems
 in both nonconvex analysis/mechancis and global optimization, such as phase transitions in solids \cite{gao-yu}, post-buckling of large deformed beam \cite{santos-gao},
 nonconvex polynomial minimization problems with box and integer constraints \cite{gao2005,gao2007,gao-ruan2009}, Boolean and multiple integer programming \cite{fang-gao2007,wang-fang-gao2008}, fractional programming \cite{fang-gao2009}, mixed integer programming\cite{gao-ruan2010}, polynomial optimization\cite{gao2006}, high-order polynomial with log-sum-exp problem\cite{gao-chen2014}.
A comprehensive review on this theory and breakthrough
 from recent challenges are given in \cite{bridgeMMS}.

The goal of this paper is to apply the canonical duality  theory for solving the challenging d.c. programming problem
  (\ref{dc}).
The rest of this paper is arranged as follows.
Based on the concept of objectivity, a canonical d.c. optimization problem and its  canonical dual
  are formulated in the next section.
  Analytical solutions and triality theory for  a general d.c. minimization problem with
   sum of nonconvex polynomial and exponential functions  are discussed in Sections 3 and 4.
  Five special examples  are illustrated  in Section 5. Some conclusions and future work are given in Section 6.

\section{Canonical d.c. problem and its canonical dual}
It is known that the linear operator $D:\calX \rightarrow \calY$ can't change the nonconvex  $W(Dx)$ to a convex function.
According to the definition of the objectivity, a nonconvex function $W:\calY \rightarrow \bbR $ is objective if and only if
there exists a function $\VV:\calY \times \calY \rightarrow \bbR$ such that $W(y) = \VV(y^T y)$ \cite{ciarlet,gao-opl16}.
Based on this fact,  a reasonable assumption can be made for the general problem $(\calP_{dc})$.
\begin{Assumption}[Canonical   Transformation and Canonical Measure]  $\;$ \newline
For a given nonconvex function $W:\calY \rightarrow \bbR \cup \{\infty\}$,  there exists a nonlinear    mapping
$\Lambda:\calX \rightarrow \calE$ and a  convex, l.s.c function  $\VV:\calE  \rightarrow \bbR \cup \{ \infty\} $ such that
\beq
W(Dx) = \VV( \Lambda(x)).   \label{eq-ct}
\eeq
\end{Assumption}

The nonlinear transformation (\ref{eq-ct}) is actually  the {\em canonical transformation}, first
introduced  by Gao in 2000 \cite{gao-jogo00},
 and  $\xi = \Lambda(x)  $ is called a  {\em canonical  measure}.
The canonical measure $\xi = \Lambda(x)$ is also called the {\em
geometrically admissible measure} in
the canonical duality theory \cite{gao-jogo00}, which is not necessarily to be objective.
But the most simple canonical measure in $\bbR^n$  is the quadratic function $\xi = x^T x$,
which is clearly objective. Therefore, the canonical function can be viewed as a generalized objective function.
 Thus,  based on Assumption 1, the generalized d.c. programming problem $(\calP_{dc})$ can be
  written in a canonical d.c.  minimization problem form  ($(\calP)$ for short):
\beq
(\calP): \;\; \min \left\{ \Pi(x) = \VV(\Lambda (x) )  -   Q (x)  | \;\; x \in \calX  \right\}.
\eeq

 Since the canonical measure  $\xi = \Lambda(x) \in \calE$ is  nonlinear
  and $\VV(\xi)$ is convex on $\calE$, the composition   $\VV(\Lambda(x))$ has a higher order nonlinearity   than $Q(x)$. Therefore, the coercivity for the target function $\Pi(x)$ should be  naturally satisfied, i.e.
 \beq
 \lim_{\| x \| \rightarrow \infty } \{\Pi (x) = \VV(\Lambda(x) ) - Q(x) \} = \infty , \label{eq-coe}
 \eeq
 which is a sufficient condition for existence of a global minimal solution to $(CDC)$ (otherwise, the set $\calX$ should be bounded).
 Clearly, this generalized d.c. minimization problem can be used to model a  reasonably large class of    real-world problems
in mathematical physics  \cite{Gao00duality,gao-amma03}, global optimization \cite{gao-cace09},
and computational sciences \cite{bridgeMMS}.

 By the fact that $\VV(\xi)$ is convex, l.s.c. on $\calE$, its conjugate can be uniquely defined by the Fenchel transformation
 \[
 \VV^*(\xi^*) = \sup \{ \la \xi ; \xi^* \ra - \VV(\xi) | \;\; \xi \in \calE \}.
 \]
The bilinear form $\la \xi ; \xi^* \ra$ puts $\calE$ and $\calE^*$ in duality.
According to convex analysis (cf. \cite{eke-tem}), $\VV^*:\calE^* \rightarrow \bbR \cup \{ + \infty\}$ is also convex, l.s.c.  on its domain $\calE^*$ and the following generalized canonical duality relations
\cite{gao-jogo00}  hold on $\calE \times \calE^*$
\[
\xi^* \in \partial \VV(\xi) \;\; \Leftrightarrow \;\; \xi \in \partial \VV^*(\xi^*) \;\; \Leftrightarrow \;\;
\VV(\xi) + \VV^*(\xi^*) = \la \xi ; \xi^* \ra .
\]
 Replacing $ \VV(\Lambda(x)) $ in  the target function $\Pi(x)$  by the Fenchel-Young equality
 $\VV(\xi)  =  \la \xi ; \xi^* \ra - \VV^*(\xi^*)$,
   Gao and Strang's  total complementary function  (see \cite{gao-jogo00})
   $\Xi: \calX \rightarrow \calE^* \rightarrow \bbR \cup \{ - \infty\}$  for this
   (CDC) can be obtained as
 \beq
 \Xi(x, \xi^* ) = \la \Lambda(x)  ; \xi^* \ra - \VV^*(\xi^*) - Q(x)  .
 \eeq
 By this total complementary function, the canonical dual of $\Pi(x)$ can be obtained as
 \beq
 \Pi^d(\xi^*) =  \inf \{ \Xi(x, \xi^*) | \;\; x \in \calX \} = Q^{\Lambda}(\xi^*) - \VV^*(\xi^*),
 \eeq
 where $Q^\Lambda:\calE^* \rightarrow \bbR \cup\{ - \infty\}$ is the so-called $\Lambda$-conjugate of $Q(x)$ defined by
 (see \cite{gao-jogo00})
 \[
 Q^\Lambda(\xi^*) =  \inf \{ \la \Lambda(x) ;  \xi^* \ra - Q(x) \; | \;\; x \in \calX \}.
 \]
  If this $\Lambda$-conjugate has a non-empty  effective domain, the following
   canonical duality
   \beq
   \inf_{x \in \calX} \Pi(x) = \sup_{\xi^* \in \calE^*} \Pi^d(\xi^*)
   \eeq
   holds under certain conditions,  which will be illustrated in the next section.

    \section{ Application and analytical solution}
Let us consider a special application in $\bbR^n$ such  that
\beq
g(x)=  W(Dx) = \sum_{i=1}^p\exp\lpa\frac{1}{2}x^T A_i
x-\alpha_i\rpa +  \sum_{j=1}^r\frac{1}{2}\lpa\frac{1}{2}x^T B_jx-\beta_j\rpa^2,
\eeq
where $ \{ A_i \}_{i=1}^p \in\mathbb{R}^{n\times n}$ are symmetric matrices
and $\{B_j \}_{j=1}^r \in\mathbb{R}^{n\times n}$ are  symmetric positive definite matrices,
 $\alpha_i$ and $\beta_j$ are real numbers.
   Clearly, $g:\bbR^n \rightarrow \bbR$ is nonconvex and highly nonlinear. This type of nonconvex function covers many real applications.

  The canonical measure in this application can be given  as
$$
\xi=\bpmat \theta\\ \eta \epmat=\Lambda(x)=\bpmat
\lbc\frac{1}{2}x^T A_ix\rbc_{i=1}^p\vspace{.2cm}\\
\lbc\frac{1}{2}x^T B_jx\rbc_{j=1}^r
\epmat
~:~\mathbb{R}^n\rightarrow\mcal{E}_a\subseteq\mathbb{R}^m
$$
where $m=p+r$. Therefore, a canonical function can be  defined on  $\calE_a$:
$$
V(\xi)=V_1(\theta)+V_2(\eta)
$$
where
\begin{eqnarray}
&&V_1(\theta)= \sum_{i=1}^p\exp\lpa\theta_i-\alpha_i\rpa,\non\\
&&V_2(\eta)=\sum_{j=1}^r\frac{1}{2}(\eta_j-\beta_j)^2.\non
\end{eqnarray}
Here $\theta_i$ and $\eta_j$ denote the $i$th component of $\theta$ and the $j$th component of $\eta$, respectively. Since $V_1(\theta)$ and $V_2(\eta)$ are convex,
$V(\xi)$ is a convex function. By Legendre transformation, we have the following equation
\beq\label{eq:legendre}
V(\xi)+V^*(\zeta)=\xi^T\zeta,
\eeq
where
$$
\zeta=\bpmat \tau\\\sigma \epmat
=\bpmat  \nabla V_1(\theta)\\\nabla V_2(\eta) \epmat
=\bpmat \lbc
\exp\lpa\theta_i-\alpha_i\rpa
\rbc_{i=1}^p\vspace{0.1cm}\\
 \lbc\eta_j-\beta_j\rbc_{j=1}^r\epmat ~:~\mcal{E}_a\rightarrow \mcal{E}_a^* \subset \bbR^m
$$
and $V^*(\zeta)$ is the conjugate function of $V(\xi)$, defined as
\[
V^*(\zeta)=V_1^*(\tau)+V_2^*(\sigma)
\]
with
\begin{eqnarray}
&&V_1^*(\tau)=\sum_{i=1}^p\lpa\alpha_i+\ln(\tau_i)-1\rpa\tau_i,\label{eq:funV1star}\non\\
&&V_2^*(\sigma)=\half\sigma^T\sigma+\beta^T\sigma,\label{eq:funV2star}\non
\end{eqnarray}
where $\beta=\{\beta_j\}$.

 Since the canonical measure in this application is a quadratic operator, the total complementary function
$\vX : \mbb{R}^n\times \mcal{E}_a^*\rightarrow\mbb{R}$ has the following form
\[\label{eq:totalcompf}
\vX(x,\zeta)
=\frac{1}{2}x^T G(\zeta)x-f^Tx-V_1^*(\tau)-V_2^*(\sigma),
\]
where
$$
G(\zeta)= \sum_{i=1}^p\tau_i A_i+\sum_{j=1}^r\sigma_j B_j-C.
$$
Notice that for any given $\zeta$, the total complementary function $\vX(x,\zeta)$ is a quadratic function of $x$ and its stationary points are the solutions of the following equation
\begin{equation}\label{eq:partialXi}
\nabla_{x}\vX(x,\zeta)= G(\zeta)x-f=0.
\end{equation}
If $\det(G(\zeta))\neq 0$ for a given $\zeta$, then  (\ref{eq:partialXi})  can be solved analytically to have a unique solution
$x= G(\zeta)^{\inv}f$.
Let
\[
\mcal{S}_a=\lbc \zeta\in\mcal{E}_a^*| \; ~  \det(G(\zeta))\neq0 \rbc .
\]
Thus,  on $\mcal{S}_a$ the canonical dual function $\vP^d(\zeta)$ can then be written explicitly as
\[\label{eq:dualfun}
\vP^d(\zeta)=&-\frac{1}{2}f^T G(\zeta)^{\inv}f -  V_1^*(\tau)-V_2^*(\sigma).
\]
 Clearly, both $\vP^d(\zeta)$ and its domain  $\mcal{S}_a$ are nonconvex.
The canonical dual problem is to find all stationary points of $\vP^d(\zeta)$ on its domain, i.e.
\begin{equation}\label{p:dual}
(\mcal{P}^d):~~~~\sta\lbc\vP^d(\zeta)~|~\zeta\in\mcal{S}_a\rbc.
\end{equation}
\begin{theorem}[Analytic Solution and Complementary-Dual Principle]\label{th:AnalSolu} $\;$\newline
Problem ($\calP^d$) is canonical dual to the problem ($\calP$) in the sense that if $\bar{\zeta}\in\mcal{S}_a$ is a stationary  point of $\vP^d(\zeta)$, then
\beq\label{eq:solvedx}
\bar{x}=G(\bar\zeta)^{\inv}f
\eeq
is a stationary  point of $\vP(x)$, the pair $(\bar x,\bar\zeta)$ is a stationary point of $\vX(x,\zeta)$, and we have
\beq\label{eq:nogap}
\vP(\bar{x})=\vX(\bar x,\bar\zeta)=\vP^d(\bar{\zeta}).
\eeq
\end{theorem}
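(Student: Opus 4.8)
\quad The plan is to derive all three assertions from a single envelope-type identity linking the gradient of the closed-form dual $\vP^d$ to the $\zeta$-gradient of the total complementary function $\vX$. This identity is available precisely because $\vP^d(\zeta)=\vX\bigl(G(\zeta)^{\inv}f,\zeta\bigr)$ is obtained by eliminating $x$ through the stationarity equation \eqref{eq:partialXi}; once it is established, stationarity of $\bar x$ for $\vP$, stationarity of $(\bar x,\bar\zeta)$ for $\vX$, and the value chain \eqref{eq:nogap} all follow quickly from the Fenchel--Young equality \eqref{eq:legendre} and the explicit forms of $V_1,V_2,V_1^*,V_2^*$.

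First I would compute $\nabla_\zeta\vX(x,\zeta)$ componentwise: using $\tfrac{d}{d\tau_i}V_1^*(\tau)=\alpha_i+\ln\tau_i$ and $\tfrac{d}{d\sigma_j}V_2^*(\sigma)=\sigma_j+\beta_j$ one gets $\partial_{\tau_i}\vX=\tfrac12 x^TA_ix-\alpha_i-\ln\tau_i$ and $\partial_{\sigma_j}\vX=\tfrac12 x^TB_jx-\sigma_j-\beta_j$. Next I would differentiate $\vP^d(\zeta)=-\tfrac12 f^TG(\zeta)^{\inv}f-V_1^*(\tau)-V_2^*(\sigma)$ directly on the open set $\mcal{S}_a$, using $\partial_{\tau_i}G(\zeta)=A_i$, $\partial_{\sigma_j}G(\zeta)=B_j$ and the matrix identity $\partial(G^{\inv})=-G^{\inv}(\partial G)G^{\inv}$, which gives $\partial_{\tau_i}\vP^d=\tfrac12(G(\zeta)^{\inv}f)^TA_i(G(\zeta)^{\inv}f)-\alpha_i-\ln\tau_i$ and the analogous formula in $\sigma_j$. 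Comparing the two shows the envelope identity $\nabla_\zeta\vP^d(\zeta)=\left.\nabla_\zeta\vX(x,\zeta)\right|_{x=G(\zeta)^{\inv}f}$. Hence if $\bar\zeta\in\mcal{S}_a$ is stationary for $\vP^d$, then with $\bar x=G(\bar\zeta)^{\inv}f$ we have $\nabla_\zeta\vX(\bar x,\bar\zeta)=0$; and $\nabla_x\vX(\bar x,\bar\zeta)=G(\bar\zeta)\bar x-f=0$ holds by the very definition of $\bar x$, so $(\bar x,\bar\zeta)$ is a stationary point of $\vX$. Reading $\nabla_\zeta\vX(\bar x,\bar\zeta)=0$ off componentwise also yields the canonical duality relations $\bar\tau_i=\exp(\tfrac12\bar x^TA_i\bar x-\alpha_i)$ and $\bar\sigma_j=\tfrac12\bar x^TB_j\bar x-\beta_j$, i.e. $\bar\zeta=\nabla V(\Lambda(\bar x))$.

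For the stationarity of $\vP$, the chain rule gives $\nabla_x\vP(x)=\sum_{i=1}^p\exp(\tfrac12 x^TA_ix-\alpha_i)A_ix+\sum_{j=1}^r(\tfrac12 x^TB_jx-\beta_j)B_jx-(Cx+f)$; evaluating at $\bar x$ and substituting the relations just obtained turns the exponential coefficients into $\bar\tau_i$ and the linear ones into $\bar\sigma_j$, so that $\nabla_x\vP(\bar x)=G(\bar\zeta)\bar x-f=0$. It then remains to prove \eqref{eq:nogap}. The identity $\vX(\bar x,\bar\zeta)=\vP^d(\bar\zeta)$ is immediate, since plugging $\bar x=G(\bar\zeta)^{\inv}f$ into the quadratic $\vX(\cdot,\bar\zeta)$ reproduces exactly the expression defining $\vP^d$. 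And $\vP(\bar x)=\vX(\bar x,\bar\zeta)$ follows from $\vP(x)-\vX(x,\zeta)=V(\Lambda(x))+V^*(\zeta)-\la\Lambda(x);\zeta\ra$, which is nonnegative by the Fenchel--Young inequality behind \eqref{eq:legendre} and vanishes precisely when $\zeta=\nabla V(\Lambda(x))$ --- the relation already verified for $(\bar x,\bar\zeta)$.

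The step I expect to be the main obstacle --- really the only part that is not routine bookkeeping --- is the direct differentiation of $\vP^d$: handling $\partial_{\tau_i}\bigl(f^TG(\zeta)^{\inv}f\bigr)=-f^TG(\zeta)^{\inv}A_iG(\zeta)^{\inv}f$ through the inverse-derivative identity and checking that it cancels cleanly against $\partial_{\tau_i}V_1^*$ to produce the envelope identity. I would also note, en route, that $\mcal{S}_a$ is open so these derivatives are legitimate there, and that membership $\zeta\in\mcal{E}_a^*$ forces the components $\tau_i$ to be positive, so $\ln\tau_i$ is well defined.
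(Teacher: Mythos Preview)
Your argument is correct. The paper does not actually supply a proof of this theorem; it simply states that ``the proof of this theorem is analogous with that in \cite{Gao00duality}.'' Your envelope-identity computation is exactly the standard route in canonical duality theory, and in fact the paper itself records the gradient $\nabla\vP^d(\zeta)$ in equation \eqref{eq:der1Pid} in precisely the form your identity predicts, so your derivation is fully consistent with the surrounding material. The three pieces --- stationarity of $(\bar x,\bar\zeta)$ for $\vX$, stationarity of $\bar x$ for $\vP$ via the canonical relations $\bar\zeta=\nabla V(\Lambda(\bar x))$, and the value chain via Fenchel--Young --- are all handled correctly.

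One minor remark: you describe the differentiation of $f^TG(\zeta)^{\inv}f$ as ``the main obstacle,'' but this step is entirely routine once the derivative formula $\partial(G^{\inv})=-G^{\inv}(\partial G)G^{\inv}$ is invoked, and your sketch already handles it cleanly. There is no genuine obstacle in this proof; it is, as you say, bookkeeping.
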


The proof of this theorem  is  analogous with that in \cite{Gao00duality}.
Theorem \ref{th:AnalSolu} shows that  there is no duality gap between the primal problem ($\calP$) and the canonical dual problem ($\calP^d$).

\section{Triality theory}\label{se:triality}
In this section we will study  global optimality conditions for the critical solutions of  the primal and dual problems. In order to identify both global and local extrema of both two problems, we let
\begin{eqnarray*}
&&\mcal{S}_a^+= \lbc\zeta\in\mcal{S}_a~|~G (\zeta) \succ 0\rbc,\label{eq:Saplus}\\
&&\mcal{S}_a^-= \lbc\zeta\in\mcal{S}_a~|~G (\zeta) \prec 0\rbc.\label{eq:Saminus}
\end{eqnarray*}
where $G\succ 0$ means that $G$ is a positive definite matrix and where $G\prec 0$ means that $G$ is a negative definite matrix. It is easy to  prove that both $\calS_a^+$ and $\calS_a^-$ are convex sets and
\[
Q^\Lambda(\zeta) = \inf \{ \la \Lambda(x) ; \zeta \ra - Q(x) | \;\; x \in \bbR^n \} = \left\{
\begin{array}{ll}
 -\frac{1}{2}f^T G(\zeta)^{\inv}f \;\; & \mbox{ if } \zeta \in \mcal{S}_a^+\\
 -\infty &  \mbox{ otherwise }
 \end{array} \right.
 \]
This shows that  $\mcal{S}_a^+$ is an effective domain of $Q^\Lambda(\zeta)$.

For convenience, we first give the first and second derivatives of functions $\vP(x)$ and $\vP^d(\zeta)$:
\begin{eqnarray}
&& \nabla \vP(x)= G (\zeta) x - f,\label{eq:der1Pi}\\
&& \nabla^2\vP(x)= G+Z_0 H Z_0^T,\label{eq:der2Pi}\\
&& \nabla \vP^d(\zeta)=
\left(\begin{array}{l}
\lbc\frac{1}{2}f^T G^\inv A_i G^{\inv}f-\alpha_i-\ln(\tau_i)\rbc_{i=1}^p\vspace{.2cm}\\
\lbc\frac{1}{2}f^T G^\inv B_j G^{\inv}f-\sigma_j-\beta_j\rbc_{j=1}^r
\end{array}\right),\label{eq:der1Pid}\\
&& \nabla^2\vP^d(\zeta)=- Z^T G^\inv Z-H^\inv,\label{eq:der2Pid}
\end{eqnarray}
where $Z_0,Z\in\mbb{R}^{n\times m}$ and $H\in\mbb{R}^{m\times m}$ are defined as
\begin{eqnarray*}
&& Z_0=
\begin{bmatrix}
 A_1x,\ldots, A_px, B_1x,\ldots, B_rx
\end{bmatrix},\label{eq:matrixF}\\
&&Z=
\begin{bmatrix}
 A_1G^{\inv}f,\ldots, A_pG^{\inv}f, B_1G^{\inv}f,\ldots, B_rG^{\inv}f
\end{bmatrix},\label{eq:matrixF}\\
&& H=
\begin{bmatrix}
\diag(\tau)&0\\
0 &E_n
\end{bmatrix} , \label{eq:matrixH}
\end{eqnarray*}
where $E_n$ is a $n\times n$ identity matrix. By the fact that
  $ \tau > 0$,
the matrix $H^\inv$ is positive definite.

Next we can get the lemma as follows whose proof is trivial.
\begin{lemma}
If $M_1,M_2,\ldots,M_N\in\mbb{R}^{n\times n}$ are symmetric positive semi-definite matrices, then $M=M_1+M_2+\ldots+M_N$ is also a positive semi-definite matrix.
\end{lemma}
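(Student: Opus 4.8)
The plan is to verify the two defining properties of a positive semi-definite matrix directly from the definition, using only linearity of the quadratic form. First I would observe that $M$ is symmetric: since each $M_k$ satisfies $M_k^T = M_k$ and transposition is linear, $M^T = \sum_{k=1}^N M_k^T = \sum_{k=1}^N M_k = M$. This disposes of the symmetry requirement with no real work.

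The substantive (but still elementary) step is the semi-definiteness. For an arbitrary $x \in \bbR^n$, I would expand the quadratic form by bilinearity:
\[
x^T M x = x^T \lpa \sum_{k=1}^N M_k \rpa x = \sum_{k=1}^N x^T M_k x . \non
\]
Each summand $x^T M_k x$ is nonnegative because $M_k$ is positive semi-definite, and a finite sum of nonnegative real numbers is nonnegative; hence $x^T M x \ge 0$ for every $x$, which is exactly the claim. If one wishes to be pedantic, this can be phrased as an induction on $N$: the base case $N=1$ is the hypothesis, and the inductive step uses that $M_1 + \cdots + M_{N}= (M_1+\cdots+M_{N-1}) + M_N$ is a sum of two symmetric positive semi-definite matrices, together with the two-term case just proved. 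But the direct argument above already covers the general $N$ in one line, so the induction is optional window-dressing.

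There is essentially no obstacle here: the result is an immediate consequence of the linearity of $x \mapsto x^T(\cdot)x$ and the closure of the nonnegative reals under addition, which is precisely why the paper flags the proof as trivial. The only thing worth stating carefully is that both properties — symmetry and nonnegativity of the quadratic form — must be checked, since positive semi-definiteness is conventionally defined for symmetric matrices; beyond that, the argument is complete.
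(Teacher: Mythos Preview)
Your proof is correct and is precisely the trivial direct verification the paper has in mind: linearity of the quadratic form plus closure of the nonnegative reals under addition. There is nothing to add.
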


\begin{lemma}
If $\lambda_{G}$ is an arbitrary eigenvalue of $G$, it follows that
$$\lambda_{G}\geq \sum_{i=1}^p\tau_i \lambda^{A_i}_{min}+\sum_{j=1}^r\sigma_j\bar\lambda^{B_j}-\lambda^{C}_{max},$$
in which $\lambda^{A_i}_{min}$ is the smallest eigenvalue of $A_i$, $\lambda^{C_i}_{max}$ is the largest eigenvalue of $C_i$, and
\begin{equation}\label{eq:111}
\bar\lambda^{B_j}=
    \left\{\begin{array}{ll}
    \lambda^{B_j}_{min},&~~\sigma_j> 0\vspace{.3cm}\\
    \lambda^{B_j}_{max},&~~\sigma_j\leq 0,
    \end{array}\right.
    \end{equation}
where $\lambda^{B_j}_{min}$ and $\lambda^{B_j}_{max}$ are the smallest eigenvalue and the largest eigenvalue of $B_j$ respectively.
\end{lemma}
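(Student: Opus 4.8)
The plan is to bound an arbitrary eigenvalue of $G(\zeta) = \sum_{i=1}^p \tau_i A_i + \sum_{j=1}^r \sigma_j B_j - C$ from below by using the variational (Rayleigh quotient) characterization of eigenvalues together with the additivity principle of Lemma~1. First I would fix an eigenvector $v$ with $\|v\|=1$ associated to $\lambda_G$, so that $\lambda_G = v^T G(\zeta) v = \sum_{i=1}^p \tau_i\, v^T A_i v + \sum_{j=1}^r \sigma_j\, v^T B_j v - v^T C v$. The idea is then to estimate each term separately. For the terms $\tau_i v^T A_i v$: since $\tau_i > 0$ (recall $\tau = \nabla V_1(\theta) > 0$), we have $v^T A_i v \ge \lambda^{A_i}_{\min}\|v\|^2 = \lambda^{A_i}_{\min}$, hence $\tau_i v^T A_i v \ge \tau_i \lambda^{A_i}_{\min}$. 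For the term $-v^T C v$: the Rayleigh quotient gives $v^T C v \le \lambda^C_{\max}$, so $-v^T C v \ge -\lambda^C_{\max}$.

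The only term requiring care is $\sigma_j v^T B_j v$, because the sign of $\sigma_j$ is not fixed. I would split into the two cases from~\eqref{eq:111}: if $\sigma_j > 0$, then multiplying the inequality $v^T B_j v \ge \lambda^{B_j}_{\min}$ by the positive scalar $\sigma_j$ preserves the direction, giving $\sigma_j v^T B_j v \ge \sigma_j \lambda^{B_j}_{\min} = \sigma_j \bar\lambda^{B_j}$; if $\sigma_j \le 0$, then multiplying the inequality $v^T B_j v \le \lambda^{B_j}_{\max}$ by the nonpositive scalar $\sigma_j$ reverses it, giving $\sigma_j v^T B_j v \ge \sigma_j \lambda^{B_j}_{\max} = \sigma_j \bar\lambda^{B_j}$. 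In both cases $\sigma_j v^T B_j v \ge \sigma_j \bar\lambda^{B_j}$, which is exactly the definition~\eqref{eq:111} of $\bar\lambda^{B_j}$.

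Summing these three groups of bounds over $i$ and $j$ yields
\[
\lambda_G = v^T G(\zeta) v \ge \sum_{i=1}^p \tau_i \lambda^{A_i}_{\min} + \sum_{j=1}^r \sigma_j \bar\lambda^{B_j} - \lambda^C_{\max},
\]
which is the claimed inequality. (Lemma~1 is not strictly needed if one works directly with the Rayleigh quotient, but it provides an alternative packaging: one may instead write $G(\zeta) + \lambda^C_{\max} E_n - \sum_i \tau_i \lambda^{A_i}_{\min} E_n - \sum_j \sigma_j \bar\lambda^{B_j} E_n$ as a sum of positive semidefinite matrices and invoke Lemma~1 to conclude it is positive semidefinite, which gives the same eigenvalue bound.) The main obstacle—really the only nontrivial point—is the sign bookkeeping for the $B_j$ terms, and that is precisely what the piecewise definition of $\bar\lambda^{B_j}$ in~\eqref{eq:111} is engineered to handle; everything else is a direct application of the min–max characterization of extreme eigenvalues of symmetric matrices. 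I would also note in passing that positive definiteness of $B_j$ is only used to guarantee $\lambda^{B_j}_{\min} > 0$ (and hence that $\bar\lambda^{B_j}$ is well defined as stated), but the inequality itself holds for any symmetric $B_j$.
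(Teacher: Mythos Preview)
Your proof is correct and is essentially the same argument as the paper's: the paper takes the ``alternative packaging'' you describe in your parenthetical---it shows each of $\tau_i(A_i-\lambda^{A_i}_{\min}E_n)$, $\lambda^{C}_{\max}E_n-C$, and $\sigma_j(B_j-\bar\lambda^{B_j}E_n)$ is positive semidefinite, sums them via Lemma~1, and reads off the eigenvalue bound---while your main line unwinds this through the Rayleigh quotient on a fixed eigenvector. The two presentations are equivalent, and you explicitly note both.
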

\begin{proof} Firstly, we need prove $\tau_i(A_i-\lambda^{A_i}_{min}E_n)$, $\lambda^{C}_{max}E_n-C$ and $\sigma_j(B_j-\bar\lambda^{B_j}E_n)$ are all symmetric positive semi-definite matrices.
\begin{enumerate}[(a)]
\item As $\lambda^{A_i}_{min}$ is the smallest eigenvalue of $A_i$, then $A_i-\lambda^{A_i}_{min}E_n$ is symmetric positive semi-definite, so $\tau_i(A_i-\lambda^{A_i}_{min}E_n)$ is symmetric positive semi-definite with $\tau_i=\exp\lpa\theta_i-\alpha_i\rpa>0$.
\item As $\lambda^{C}_{max}$ is the largest eigenvalue of $C$, then $\lambda^{C}_{max}E_n-C$ is a symmetric positive semi-definite matrix.
\item
\begin{enumerate}[(c.1)]
\item As $\lambda^{B_j}_{min}$ is the smallest eigenvalue of $B_j$, then $B_j-\lambda^{B_j}_{min}E_n$ is symmetric positive semi-definite, so when $\sigma_j> 0$ it holds that $\sigma_j(B_j-\lambda^{B_j}_{min}E_n)$ is symmetric positive semi-definite.
\item As $\lambda^{B_j}_{max}$ is the largest eigenvalue of $B_j$, then $B_j-\lambda^{B_j}_{max}E_n$ is symmetric negative semi-definite, so when $\sigma_j\leq 0$ it holds that $\sigma_j(B_j-\lambda^{B_j}_{max}E_n)$ is symmetric positive semi-definite.
\end{enumerate}
   From (c.1) and (c.2), we know $\sigma_j(B_j-\bar\lambda^{B_j}E_n)$ is always symmetric positive semi-definite.
\end{enumerate}
Then by (a), (b), (c) and Lemma 1, we have $$\sum_{i=1}^p\tau_i(A_i-\lambda^{A_i}_{min}E_n)+\sum_{j=1}^r\sigma_j(B_j-\bar\lambda^{B_j}E_n)+\lambda^{C}_{max}E_n-C$$ is a positive semi-definite matrix, which is equivalent to
$$G-\lpa\sum_{i=1}^p\tau_i\lambda^{A_i}_{min}+\sum_{j=1}^r\sigma_j\bar\lambda^{B_j}E_n-\lambda^{C}_{max}\rpa E_n$$
is a positive semi-definite matrix, which implies that for every eigenvalue of $G$, it is greater than or equal to $\sum_{i=1}^p\tau_i \lambda^{A_i}_{min}+\sum_{j=1}^r\sigma_j\bar\lambda^{B_j}-\lambda^{C}_{max}$.\hfill\qed
\end{proof}

Based on the above lemma, the following assumption is given for the establishment of solution method.

\begin{Assumption} \label{assmp2}
There is a critical point $\zeta=(\tau,\sigma)$ of $\vP^d(\zeta)$, satisfying $\Delta>0$ where $$\Delta=\sum_{i=1}^p\tau_i\lambda^{A_i}_{min}+\sum_{j=1}^r\sigma_j\bar\lambda^{B_j}-\lambda^{C}_{max}.$$
\end{Assumption}

\begin{lemma}
If $\bar\zeta$ is a stationary point of $\Pi^d(\zeta) $ satisfying Assumption 1, then $\bar{\zeta}\in\mcal{S}_a^+$.
\end{lemma}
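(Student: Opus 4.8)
The plan is to use Lemma 3 together with Assumption~\ref{assmp2} to pin down the sign of $G(\bar\zeta)$. First I would observe that, since $\bar\zeta\in\mcal{S}_a$, the matrix $G(\bar\zeta)$ is by definition nonsingular, so all of its eigenvalues are nonzero. Next, applying Lemma 3 at the point $\bar\zeta$ (noting that $\tau_i=\exp(\theta_i-\alpha_i)>0$, so the hypotheses of Lemma 3 are met for any stationary point), every eigenvalue $\lambda_G$ of $G(\bar\zeta)$ satisfies $\lambda_G\ge\Delta$, where $\Delta$ is exactly the quantity appearing in Assumption~\ref{assmp2}. By Assumption~\ref{assmp2}, $\Delta>0$ at this critical point, hence every eigenvalue of $G(\bar\zeta)$ is strictly positive, i.e. $G(\bar\zeta)\succ0$.

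Finally I would conclude that $\bar\zeta\in\mcal{S}_a^+$ directly from the definition $\mcal{S}_a^+=\{\zeta\in\mcal{S}_a\mid G(\zeta)\succ0\}$: we have just shown $\bar\zeta\in\mcal{S}_a$ and $G(\bar\zeta)\succ0$. One small bookkeeping point to address is the statement's reference to ``Assumption 1'' — in context this must mean Assumption~\ref{assmp2} (the one asserting $\Delta>0$ at a critical point), since Assumption 1 is the canonical transformation hypothesis, which plays no role here; I would phrase the argument so it is clear that the relevant hypothesis is $\Delta>0$.

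There is essentially no hard part: the proof is a two-line consequence of Lemma 3 and the positivity of $\Delta$. The only thing that requires a moment's care is making sure Lemma 3's conclusion is applied with the correct $\bar\lambda^{B_j}$ convention from \eqref{eq:111} — but since Lemma 3 already incorporates that convention, and $\Delta$ in Assumption~\ref{assmp2} uses the same $\bar\lambda^{B_j}$, the lower bound $\lambda_G\ge\Delta$ is immediate and no case analysis on the signs of the $\sigma_j$ is needed at this stage.
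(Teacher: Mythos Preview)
Your proposal is correct and follows essentially the same approach as the paper: apply Lemma~3 to get $\lambda_G\ge\Delta$, then use the hypothesis $\Delta>0$ to conclude $G(\bar\zeta)\succ0$ and hence $\bar\zeta\in\mcal{S}_a^+$. Your observation about the ``Assumption 1'' labeling is also apt---the paper's lemma statement and proof both intend Assumption~\ref{assmp2}.
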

\begin{proof}
From Lemma 3, we know if $\lambda_{G}$ is an arbitrary eigenvalue of $G$, it holds that $\lambda_{G}\geq \Delta$. If $\bar\zeta$ is a critical point satisfying Assumption 1, then $\Delta>0$, so for every eigenvalue of $G$, we have $\lambda_{G}\geq \Delta>0$, then $G$ is a positive definite matrix, i.e., $\bar{\zeta}\in\mcal{S}_a^+$.\hfill\qed
\end{proof}

The following lemma is needed here. Its proof is omitted, which is similar to that of Lemma 6 in \cite{Gao-Wu-triality12}.
\begin{lemma}\label{lm:PplusDUD}
Suppose that $ P\in\mbb{R}^{n\times n}$, $ U\in\mbb{R}^{m\times m}$ and $ W\in\mbb{R}^{n\times m}$ are given symmetric matrices with
$$
 P=\begin{bmatrix} P_{11} &  P_{12}\\ P_{21} &  P_{22} \end{bmatrix}\prec 0, ~~
 U=\begin{bmatrix} U_{11} & 0\\0 &  U_{22} \end{bmatrix}\succ 0, \wand
 W=\begin{bmatrix} W_{11} & 0\\0 & 0 \end{bmatrix},
$$
where $ P_{11}$, $ U_{11}$ and $ W_{11}$ are $r\times r$-dimensional matrices, and $ W_{11}$ is nonsingular. Then,
\begin{equation}\label{eq:apl m}
 - W^T P^{-1} W- U^{-1}\preceq0\Leftrightarrow P+ W U W^T\preceq0.
\end{equation}
\end{lemma}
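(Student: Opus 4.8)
The plan is to exploit the block structure and reduce the $n\times n$ equivalence to an $r\times r$ one, where the Schur complement machinery applies cleanly. First I would introduce a conformal $2\times 2$ block partition of $\bbR^n$ into its first $r$ coordinates and remaining $n-r$ coordinates, so that $P$, $U$ and $W$ have the displayed forms with $P_{22}$, $U_{22}$ being $(n-r)\times(n-r)$ and $(m-r)\times(m-r)$ respectively. The key observation is that $W$ acts only on the first block: $WUW^T = \begin{bmatrix} W_{11}U_{11}W_{11}^T & 0 \\ 0 & 0 \end{bmatrix}$, so $P + WUW^T$ differs from $P$ only in the $(1,1)$ block, becoming $\begin{bmatrix} P_{11}+W_{11}U_{11}W_{11}^T & P_{12} \\ P_{21} & P_{22} \end{bmatrix}$. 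Since $P\prec 0$ we have $P_{22}\prec 0$, so by the standard Schur complement criterion $P + WUW^T \preceq 0$ if and only if the Schur complement $P_{11} + W_{11}U_{11}W_{11}^T - P_{12}P_{22}^{-1}P_{21} \preceq 0$.

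Next I would run the same reduction on the left-hand side. Because $W_{11}$ is nonsingular (and $U_{11}\succ 0$), the matrix $W_{11}U_{11}W_{11}^T$ is positive definite, hence invertible; likewise $U^{-1} = \begin{bmatrix} U_{11}^{-1} & 0 \\ 0 & U_{22}^{-1}\end{bmatrix}$. A short computation gives $W^T P^{-1} W = \begin{bmatrix} W_{11}^T (P^{-1})_{11} W_{11} & 0 \\ 0 & 0\end{bmatrix}$ where $(P^{-1})_{11} = (P_{11} - P_{12}P_{22}^{-1}P_{21})^{-1}$ is the inverse of the Schur complement of $P_{22}$ in $P$ (this uses $P$ invertible, which follows from $P\prec 0$). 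Therefore $-W^T P^{-1} W - U^{-1}$ is block diagonal, and its negative-semidefiniteness is equivalent to negative-semidefiniteness of each diagonal block; the $(2,2)$ block is $-U_{22}^{-1}\prec 0$, which is automatic, so the condition reduces to $-W_{11}^T(P_{11}-P_{12}P_{22}^{-1}P_{21})^{-1}W_{11} - U_{11}^{-1} \preceq 0$.

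It then remains to show the two reduced $r\times r$ conditions coincide. Writing $S = P_{11} - P_{12}P_{22}^{-1}P_{21}$ (note $S\prec 0$ since $P\prec 0$) and $T = W_{11}U_{11}W_{11}^T \succ 0$, the right-hand reduced condition is $S + T \preceq 0$ and the left-hand reduced condition is $-W_{11}^T S^{-1} W_{11} - U_{11}^{-1} \preceq 0$. Conjugating the latter by the nonsingular matrix $W_{11}$ (which preserves sign-definiteness) turns it into $-S^{-1} - W_{11}^{-T}U_{11}^{-1}W_{11}^{-1} \preceq 0$, i.e. $-S^{-1} - T^{-1}\preceq 0$, i.e. $S^{-1} + T^{-1}\succeq 0$. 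So the whole lemma collapses to the scalar-flavored matrix fact: for symmetric $S\prec 0$ and $T\succ 0$, $S + T\preceq 0 \iff S^{-1}+T^{-1}\succeq 0$. This last equivalence I would prove by the factorization $S^{-1}+T^{-1} = S^{-1}(S+T)T^{-1}$ together with the congruence $S + T = T(T^{-1}+S^{-1})S = T(S^{-1}+T^{-1})S$ read the other way, or more symmetrically by noting $S^{-1}(S+T)T^{-1}$ is a congruence transform only after symmetrizing — so the cleanest route is: $T\succ0$ and $S\prec 0$ give $T^{-1}\succ 0$, $S^{-1}\prec 0$, and $S+T = S(S^{-1}+T^{-1})T$; since $X\mapsto S X T$ composed appropriately is not a congruence, I would instead use $(S^{-1}+T^{-1}) = T^{-1/2}\big(T^{1/2}S^{-1}T^{1/2} + I\big)T^{-1/2}$ and $(S+T) = T^{1/2}\big(T^{-1/2}ST^{-1/2}+I\big)T^{1/2}$, observing that $M := T^{-1/2}ST^{-1/2}\prec 0$ satisfies $M + I \preceq 0 \iff$ all eigenvalues of $M$ are $\le -1 \iff$ all eigenvalues of $M^{-1}$ lie in $[-1,0) \iff M^{-1}+I \succeq 0$, and $M^{-1} = T^{1/2}S^{-1}T^{1/2}$. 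The main obstacle is bookkeeping: keeping the two block partitions (the $\bbR^n = \bbR^r\oplus\bbR^{n-r}$ one for $P$ and $W$, and the $\bbR^m = \bbR^r \oplus \bbR^{m-r}$ one for $U$) straight and making sure every Schur complement used is of an invertible, definite block so that the equivalences are genuine "iff"s rather than one-directional implications; once the reductions are set up correctly the core is the elementary eigenvalue argument for $M + I \preceq 0 \iff M^{-1} + I \succeq 0$ on negative definite $M$.
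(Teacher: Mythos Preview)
The paper does not actually prove this lemma; it states ``Its proof is omitted, which is similar to that of Lemma~6 in \cite{Gao-Wu-triality12}.''  So there is no in-paper proof to compare against.  That said, your proposal is correct and complete.

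Your reduction is exactly the natural one: exploit the block-zero structure of $W$ to strip off the $(n-r)$- and $(m-r)$-dimensional tails (using $P_{22}\prec 0$ for the Schur complement on the right-hand side, and the block-inverse formula $(P^{-1})_{11}=(P_{11}-P_{12}P_{22}^{-1}P_{21})^{-1}$ on the left-hand side), then congruence by $W_{11}^{-1}$ to reduce everything to the clean $r\times r$ statement
\[
S+T\preceq 0 \;\Longleftrightarrow\; S^{-1}+T^{-1}\succeq 0,\qquad S\prec 0,\;T\succ 0.
\]
Your final step via $M=T^{-1/2}ST^{-1/2}$ and the eigenvalue equivalence $\lambda_i(M)\le -1\iff \lambda_i(M^{-1})\in[-1,0)$ is correct; the identities $S+T=T^{1/2}(M+I)T^{1/2}$ and $S^{-1}+T^{-1}=T^{-1/2}(M^{-1}+I)T^{-1/2}$ are genuine congruences, so the semidefiniteness transfers in both directions.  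The only cosmetic point: the intermediate attempts with $S^{-1}(S+T)T^{-1}$ that you abandon mid-sentence could simply be deleted---the $T^{1/2}$-congruence route you settle on is the right one and needs no apology.  The bookkeeping worry you flag (two different block decompositions, of $\bbR^n$ and of $\bbR^m$) is real but you handle it correctly throughout.
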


Now, we give the main result of this paper, triality theorem, which illustrates the relationships between the primal and canonical dual problems on global and local solutions under Assumption 1.
\begin{theorem}\label{th:triality}
{\rm(\textbf{Triality Theorem})}  Suppose that $\bar{\zeta}$ is a critical point of $\vP^d(\zeta)$, and $\bar{x}=G(\bar\zeta)^{\inv}f$.
\benum
\item Min-max duality: If $\bar{\zeta}$ is the critical point satisfying Assumption 1, then the canonical min-max duality holds in the form of
\begin{equation}\label{eq:th2minmax}
\vP(\bar{x})   =\min_{x\in\mbb{R}^n} \vP(x)=\max_{\zeta\in\mcal{S}_a^+} \vP^d(\zeta)=\vP^d(\bar{\zeta}).
\end{equation}
\item Double-max duality: If $\bar{\zeta}\in\mcal{S}_a^-$, the double-max duality holds in the form that if $\bar x$ is a local maximizer of $\vP(x)$ or $\bar\zeta$ is a local maximizer of $\vP^d(\zeta)$,
 we have
    \begin{equation}\label{eq:th2maxmax}
    \vP(\bar{x})   =\max_{x\in\mcal{X}_0} \vP(x)=\max_{\zeta\in\mcal{S}_0} \vP^d(\zeta)=\vP^d(\bar{\zeta})
    \end{equation}
where $\bar x\in \mcal{X}_0 \subset \mbb{R}^n$ and $\bar \zeta\in \mcal{S}_0 \subset \mcal{S}_a^-$.
\item Double-min duality: If $\bar\zeta\in\calS_a^-$, then the double-min duality holds in the form that when $m=n$, if $\bar x$ is a local minimizer of $\vP(x)$ or $\bar\zeta$ is a local minimizer of $\vP^d(\zeta)$, we have
    \begin{equation}\label{eq:th2minmin}
    \vP(\bar{x})   =\min_{x\in\mcal{X}_0} \vP(x)=\min_{\zeta\in\mcal{S}_0} \vP^d(\zeta)=\vP^d(\bar{\zeta})
    \end{equation}
where $\bar x\in \mcal{X}_0 \subset \mbb{R}^n$ and $\bar \zeta\in \mcal{S}_0 \subset \mcal{S}_a^-$.
\eenum
\end{theorem}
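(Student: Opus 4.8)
The strategy is to exploit the perfect duality from Theorem~\ref{th:AnalSolu} together with the saddle/extremality structure of the total complementary function $\vX(x,\zeta)$, treating each of the three cases by analyzing the joint second-order behaviour of $\vX$ at the critical pair $(\bxx,\bgzz)$. The common engine is the identity $\vP(\bxx)=\vX(\bxx,\bgzz)=\vP^d(\bgzz)$ from \eqref{eq:nogap}, supplemented by the convexity of the $\zeta$-dependence (since $\vX(x,\cdot)$ differs from $-V^*$ only by the linear-in-$\zeta$ term $\la\Lambda(x);\zeta\ra$, and $V^*$ is convex) and the fact that for fixed $\zeta$ the map $x\mapsto\vX(x,\zeta)$ is quadratic with Hessian $G(\zeta)$.

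\textbf{Part 1 (min-max).} First I would invoke Lemma~4: under Assumption~\ref{assmp2}, $\bgzz\in\calS_a^+$, so $G(\bgzz)\succ 0$ and $x\mapsto\vX(x,\bgzz)$ is strictly convex with unique minimizer $\bxx=G(\bgzz)^{\inv}f$; hence $\vX(\bxx,\bgzz)=\min_{x}\vX(x,\bgzz)$. On the other side, since $\VV^*$ is convex and $\la\Lambda(x);\cdot\ra$ is linear, $\vX(x,\cdot)$ is concave in $\zeta$, so $\vP^d(\zeta)=\min_x\vX(x,\zeta)$ inherits concavity on the convex set $\calS_a^+$; the stationarity of $\bgzz$ then upgrades to $\vP^d(\bgzz)=\max_{\zeta\in\calS_a^+}\vP^d(\zeta)$. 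Combining with $\vP(\bxx)=\min_x[\,\VV(\Lambda(x))-Q(x)\,]$ — which follows because for any $x$, $\vP(x)\ge\vX(x,\bgzz)\ge\vX(\bxx,\bgzz)=\vP(\bxx)$ using the Fenchel--Young inequality $\VV(\Lambda(x))\ge\la\Lambda(x);\bgzz\ra-\VV^*(\bgzz)$ — yields \eqref{eq:th2minmax}.

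\textbf{Parts 2 and 3 (double-max and double-min).} Here $\bgzz\in\calS_a^-$, so $G(\bgzz)\prec 0$. The plan is to compute the Hessians at $(\bxx,\bgzz)$: from \eqref{eq:der2Pi}, $\nabla^2\vP(\bxx)=G+Z_0HZ_0^T$ (with $Z_0$ evaluated at $\bxx$, so $Z_0=Z$), and from \eqref{eq:der2Pid}, $\nabla^2\vP^d(\bgzz)=-Z^TG^{\inv}Z-H^{\inv}$. The crux is Lemma~\ref{lm:PplusDUD}, applied with $P=G\prec 0$, $U=H\succ 0$, $W=Z$ — but note $Z$ generally has full column blocks, not the block-zero form required; the honest route is to restrict to a local neighbourhood and pass through the standard canonical-duality argument: on suitable locally-defined sets $\calX_0\ni\bxx$ and $\calS_0\ni\bgzz$, $\vX$ restricted to $\calX_0\times\calS_0$ is a saddle function (concave-concave for double-max, convex-convex for double-min when $m=n$), whence a local maximizer (resp. minimizer) of one marginal matches that of the other with equal value via \eqref{eq:nogap}. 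For double-min I would use $m=n$ to ensure $Z$ is square and, at a nondegenerate critical point, invertible, so that $\nabla^2\vP(\bxx)\succeq 0\Leftrightarrow\nabla^2\vP^d(\bgzz)\succeq 0$ via the Lemma~\ref{lm:PplusDUD}-type equivalence $-Z^TG^{\inv}Z-H^{\inv}\preceq 0\Leftrightarrow G+ZHZ^T\preceq 0$ followed by a sign flip on both sides.

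\textbf{Main obstacle.} The delicate point is Parts 2--3: the clean Lemma~\ref{lm:PplusDUD} is stated for a $W$ with block-zero structure, which does not literally match $Z$, so the local sets $\calX_0,\calS_0$ must be constructed carefully (as sub-level or super-level neighbourhoods on which the relevant marginal is concave/convex), and one must verify that the matrix congruence relating $\nabla^2\vP$ and $\nabla^2\vP^d$ actually transfers the definiteness needed — especially controlling the case $m\ne n$ in Part 2 where $Z^TG^{\inv}Z$ may be singular. I expect the bulk of the work, and the only genuinely subtle estimates, to lie in establishing that these locally-defined neighbourhoods are nonempty and that the saddle structure of $\vX$ holds on them; the rest is bookkeeping with \eqref{eq:nogap} and the Fenchel--Young inequality.
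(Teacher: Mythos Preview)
Your Part~1 argument is essentially the paper's: Lemma~4 gives $\bgzz\in\calSap$, the Fenchel--Young inequality $V(\Lambda(x))\ge\la\Lambda(x);\bgzz\ra-V^*(\bgzz)$ yields $\vP(x)\ge\vX(x,\bgzz)\ge\vX(\bxx,\bgzz)=\vP(\bxx)$, and concavity of $\vP^d$ on $\calSap$ (from $\nabla^2\vP^d=-Z^TG^{\inv}Z-H^{\inv}\prec 0$ there) handles the dual side. Your Part~3 sketch is also close to the paper's: from $-Z^TG^{\inv}Z\succeq H^{\inv}\succ 0$ one gets $Z$ invertible (here $m=n$ is essential), then inverts the congruence directly to obtain $-G\preceq ZHZ^T$; no sign-flip gymnastics with Lemma~\ref{lm:PplusDUD} are actually needed.

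The genuine gap is Part~2. You correctly flag that Lemma~\ref{lm:PplusDUD} demands $W$ with the block-zero form $\bigl[\begin{smallmatrix}W_{11}&0\\0&0\end{smallmatrix}\bigr]$, which $Z$ does not have. Your proposed workaround --- appealing to a ``concave--concave saddle structure'' of $\vX$ on $\calX_0\times\calS_0$ --- does not close the argument: on $\calSam$, $\vX$ is indeed separately concave in $x$ (Hessian $G\prec 0$) and in $\zeta$ (Hessian $-H^{\inv}\prec 0$), but $\vP(x)=\max_\zeta\vX(x,\zeta)$ and, on $\calSam$, $\vP^d(\zeta)=\max_x\vX(x,\zeta)$ are each a supremum of concave functions, which gives no concavity of the marginals. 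Bi-concavity of $\vX$ alone does not transfer ``$\bgzz$ is a local max of $\vP^d$'' to ``$\bxx$ is a local max of $\vP$''; you really must pass through the Hessian equivalence $\nabla^2\vP^d\preceq 0\Rightarrow\nabla^2\vP\preceq 0$.

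The missing idea is a singular value decomposition. Write $ZH^{1/2}=JRK$ with $J\in\bbR^{n\times n}$, $K\in\bbR^{m\times m}$ orthogonal and $R\in\bbR^{n\times m}$ of the form $R_{ij}=\delta_i$ for $i=j\le r=\rank(Z)$ and $R_{ij}=0$ otherwise. Conjugating $-Z^TG^{\inv}Z-H^{\inv}\preceq 0$ by $KH^{1/2}$ gives $-R^T(J^TG^{\inv}J)R-E_m\preceq 0$, and \emph{now} $R$ has exactly the block-zero structure Lemma~\ref{lm:PplusDUD} requires (with $P=J^TGJ\prec 0$, $U=E_m$, $W=R$). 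The lemma yields $J^TGJ+RR^T\preceq 0$, i.e.\ $-G\succeq JRR^TJ^T=JRKH^{-1/2}\,H\,H^{-1/2}K^TR^TJ^T=ZHZ^T$, hence $\nabla^2\vP(x)=G+ZHZ^T\preceq 0$. This SVD reduction is the crux you are missing; once you have it, the construction of $\calS_0$ and $\calX_0$ via continuity of $\zeta\mapsto G(\zeta)^{\inv}f$ is routine.
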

\proof
\benum
\item  Because $\bar\zeta$ is a critical point satisfying Assumption 1, by Lemma 4 it holds $\bar{\zeta}\in\mcal{S}_a^+$, i.e., $G(\bar{\zeta})\succ0$. As $G(\bar{\zeta})\succ0$ and $ H\succ0$, by (\ref{eq:der2Pid}) we know the Hessian of the dual function is negative definitive, i.e. $\nabla^2\vP^d(\zeta)\prec0$,  which implies that $\vP^d(\zeta)$ is strictly concave over $\mcal{S}_a^+$. Hence, we get
\[\label{eq:th2Pidmax}
\vP^d(\bar\zeta)=\max_{\zeta\in\calS_a^+} \vP^d(\zeta).
\]
By the convexity of $V(\xi)$, we have
$V(\xi)-V(\bar \xi)\geq (\xi-\bar \xi)^T\nabla V(\bar \xi)=(\xi-\bar \xi)^T\bar{\zeta}$
(see \cite{gao-strang1989}),
 so $$V(\Lambda(x))-V(\Lambda(\bar x))\geq (\Lambda(x)-\Lambda(\bar x))^T\bar{\zeta},$$
which implies (see page 480 \cite{gao-opt03})
 \begin{eqnarray}
\vP(x)-\vP(\bar{x}) & \geq & (\Lambda(x)-\Lambda(\bar x))^T\bar{\zeta}-\frac{1}{2}x^T C x+\frac{1}{2}\bar x^T C \bar x -
f^T(x-\bar x) \nonumber\\
&=& \frac{1}{2}(x -\bar x )^T G(\bar \zeta) (x -\bar x )
+  [G(\bar \zeta) \bar x  - f]^T ( x- \bar x) . \label{1003}
\end{eqnarray}
By the facts that
 $G(\bar \zeta) \bar x  = f$ and
 $G(\bar \zeta)\succ 0$,  we have
   $\vP(x)\geq\vP(\bar{x})$ for any $x\in\mbb{R}^n$, which shows that $\bar x$ is a global minimizer and
  the equation (\ref{eq:th2minmax}) is true   by Theorem \ref{th:AnalSolu}
   and (\ref{eq:th2Pidmax}).

\item If $\bar{\zeta}$ is a local maximizer of $\vP^d(\zeta)$ over $\mcal{S}_a^-$, it is true that $\nabla^2\vP^d(\bar{\zeta})=- Z^T G^\inv Z- H^{-1}\preceq0$ and there exists a neighborhood $\mcal{S}_0\subset\mcal{S}_a^-$ such that for all $\zeta\in\mcal{S}_0$, $\nabla^2\vP^d(\zeta)\preceq0$. Since the map $x= G^{\inv}f$ is continuous over $\calS_a$, the image of the map over $\mcal{S}_0$ is a neighborhood of $\bar x$, which is denoted by $\mcal{X}_0$. Now we prove that for any $x\in\mcal{X}_0$, $\nabla^2\vP(x)\preceq0$, which plus the fact that $\bar{x}$ is a critical point of $\vP(x)$ implies $\bar{x}$ is a maximizer of $\vP(x)$ over $\mcal{X}_0$. By singular value decomposition, there exist orthogonal matrices $ J\in\mbb{R}^{n\times n}$, $ K\in\mbb{R}^{m\times m}$ and $ R\in\mbb{R}^{n\times m}$ with
    \begin{equation}\label{eq:th2R}
     R_{ij}=
    \left\{\begin{array}{ll}
    \delta_i, &~~ i=j \wand i=1,\ldots,r,\\
    0,&~~\wotherwise,
    \end{array}\right.
    \end{equation}
    where $\delta_i>0$ for $i=1,\ldots,r$ and $r=\rank( F)$, such that $Z H^{\frac{1}{2}}= J R K$, then
    \begin{equation}\label{eq:th2FDERK}
     Z = J R K H^{-\frac{1}{2}}.
    \end{equation}
    For any $x\in\mcal{X}_0$, let $\zeta$ be a point satisfying $x= G^{\inv}f$. Therefore, $\nabla^2\vP^d(\zeta)=- Z^T G^\inv Z- H^{-1}\preceq0$, then it holds that
    \begin{equation}\label{eq:th2hessPid}
     - H^{-\frac{1}{2}} K^T R^T J^T G^\inv J R K H^{-\frac{1}{2}}-H^{-1}\preceq0.
    \end{equation}
    Multiplying above inequality by $ K H^{\frac{1}{2}}$ from the left and $ H^{\frac{1}{2}} K^T$ from the right, it can be obtained that
    \begin{equation}\label{eq:th2hessPidequ}
     - R^T J^T G^\inv J R-E_m\preceq0,
    \end{equation}
    which, by Lemma \ref{lm:PplusDUD}, is further equivalent to
    \begin{equation}\label{eq:th2hessPidequfur}
     J^T G J+ R R^T\preceq0,
    \end{equation}
    then it follows that
    \begin{equation}\label{eq:th2hessPi}
    -G\succeq J R R^T J^T=J R K H^{-\frac{1}{2}} H H^{-\frac{1}{2}} K^T R^T J^T= Z H Z^T.
    \end{equation}
    Thus, $\nabla^2\vP(x)= G+ Z H Z^T \preceq 0$, then $\bar{x}$ is a maximizer of $\vP(x)$ over $\calX_0$.

    Similarly, we can prove that if $\bar{x}$ is a maximizer of $\vP(x)$ over $\calX_0$, then $\bar{\zeta}$ is a maximizer of $\vP^d(\zeta)$ over $\mcal{S}_0$. By the Theorem \ref{th:AnalSolu}, the equation (\ref{eq:th2maxmax}) is proved.

\item Now we prove the double-min duality.
Suppose that $\bar{\zeta}$ is a local minimizer of $\vP^d(\zeta)$ in $\mcal{S}_a^-$, then there exists a neighborhood $\mcal{S}_0\subset\mcal{S}_a^-$ of $\bar{\zeta}$ such that for any $\zeta\in\mcal{S}_0$, $\nabla^2\vP^d(\zeta)\succeq0$. Let $\mcal{X}_0$ denote the image of the map $x= G^{\inv}f$ over $\mcal{S}_0$, which is a neighborhood of $\bar x$.
    For any $x\in\mcal{X}_0$, let $\zeta$ be a point that satisfies $x= G^{\inv}f$. It follows from $\nabla^2\vP^d(\zeta)=- Z^T G^\inv Z- H^{-1}\succeq0$ that $- Z^T G^\inv Z\succeq H^{-1}\succ0$, which implies the matrix $ F$ is invertible. Then it is true that
    \begin{equation}\label{eq:th2iii1hessPid}
    - G^\inv\succeq( Z^T)^{-1} H^{-1} Z^{-1},
    \end{equation}
  which is further equivalent to
    \begin{equation}\label{eq:th2iii1hessPi}
    - G\preceq Z H Z^T.
    \end{equation}
   Thus, $\nabla^2\vP(x)= G+ Z H Z^T\succeq0$ and $x$ is a local minimizer of $\vP(x)$. The converse can be proved similarly. By Theorem \ref{th:AnalSolu}, the equation (\ref{eq:th2minmin}) is then true.
\eenum
The theorem is proved. \hfill\qed

This theroem show that by the canonical min-max duaity theory, the nonconvex d.c. programming  problem $(CDC)$ is equivalent to a concave maximization problem
\beq
({\cal P}^d) : \;\;\; \max \{ \vP^d(\zeta)   | \;\;  \zeta\in\mcal{S}_a^+ \} ,
\eeq
which can be solved by well-developed deterministic methods and algorithms, say \cite{ks,pskz,s-s}.

\section{Examples}
In this section, let $p=r=1$. From the definition of (CDC) problem, $A_1$ is a symmetric matrix, $B_1$ and $C_1$ are two positive definite matrices. According to different cases of $A_1$, following five motivating examples are provided to illustrate the proposed canonical duality method in our paper. By examining the critical points of the dual function, we will show how the dualities in the triality theory are verified by these examples.

 \subsection*{Example 1}
We consider the case that $A_1$ is positive definite. Let $\alpha_1=\beta_1=1$ and
$$
 A_1=\begin{bmatrix} 1.5 &  0\\ 0 &  2 \end{bmatrix}, ~~
 B_1=\begin{bmatrix} 0.5 &  0\\ 0 &  3 \end{bmatrix}, ~~
 C_1=\begin{bmatrix} 1.5 & 0\\ 0 & 1 \end{bmatrix},\wand
 f=\begin{bmatrix} 2 \\ 1 \end{bmatrix},
$$
then the primal problem:
$$\min_{(x,y)\in\bbR^2}\vP(x,y)=\exp\lpa0.75x^2+y^2-1\rpa+0.5\lpa 0.25x^2+1.5y^2-1\rpa^2-0.75x^2-0.5y^2-2x-y.$$
The corresponding canonical dual function is
$$
\vP^d(\tau,\sigma)=-0.5\lpa\frac{4}{1.5\tau+0.5\sigma-1.5}+\frac{1}{2\tau+3\sigma-1}\rpa-\tau\ln(\tau)-0.5\sigma^2-\sigma.
$$
In this problem, $\lambda^{A_1}_{min}=1.5$, $\lambda^{B_1}_{min}=0.5$, $\lambda^{B_1}_{max}=3$, and $\lambda^{C_1}_{max}=1.5$. It is noticed that $(\bar\tau_1,\bar\sigma_1)=(2.01147,-0.223104)$ is a critical point of the dual function $\vP^d(\tau,\sigma)$(see Figure \ref{fig:ex1pid1}). As $\bar\sigma_1<0$, we have $\bar\lambda^{B_1}=\lambda^{B_1}_{max}$ and $$\Delta=\bar\tau_1 \lambda^{A_1}_{min}+\bar\sigma_1\lambda^{B_1}_{max}-\lambda^{C_1}_{max}=0.8479>0,$$ so Assumption 1 is satisfied, then $(\bar\tau_1,\bar\sigma_1)$ is in $\calS_a^+$. By Theorem \ref{th:AnalSolu}, we get $(\bar x_1,\bar y_1)=(1.42283, 0.424878)$. Moreover, we have
\[
&\vP(\bar x_1,\bar y_1)=\vP^d(\bar\tau_1,\bar\sigma_1)=-2.8428,\non
\]
so there is no duality gap, then $(\bar x_1,\bar y_1)$ is the global solution of the primal problem, which demonstrates the min-max duality(see Figure \ref{fig:ex1.1}).

\subsection*{Example 2}
We consider the case that $A_1$ is positive semi-definite. Let $\alpha_1=\beta_1=2$ and
$$
 A_1=\begin{bmatrix} 1 &  0\\ 0 &  0 \end{bmatrix}, ~~
 B_1=\begin{bmatrix} 2 &  0\\ 0 &  1 \end{bmatrix}, ~~
 C_1=\begin{bmatrix} 1 & 0\\ 0 & 3 \end{bmatrix},\wand
 f=\begin{bmatrix} 2 \\ 2 \end{bmatrix},
$$
then the primal problem:
$$\min_{(x,y)\in\bbR^2}\vP(x,y)=\exp\lpa0.5x^2-2\rpa+0.5\lpa x^2+0.5y^2-2\rpa^2-0.5x^2-1.5y^2-2x-2y.$$
The corresponding canonical dual function is
$$
\vP^d(\tau,\sigma)=-0.5\lpa\frac{4}{\tau+2\sigma-1}+\frac{4}{\sigma-3}\rpa-\tau\ln(\tau)-\tau-0.5\sigma^2-2\sigma.
$$
In this problem, $\lambda^{A_1}_{min}=0$, $\lambda^{B_1}_{min}=1$, $\lambda^{B_1}_{max}=2$, and $\lambda^{C_1}_{max}=3$. It is noticed that $(\bar\tau_1,\bar\sigma_1)=(0.142222,3.60283)$ is a critical point of the dual function $\vP^d(\tau,\sigma)$(see Figure \ref{fig:ex2pid1}). As $\bar\sigma_1>0$, we have $\bar\lambda^{B_1}=\lambda^{B_1}_{min}$ and $$\Delta=\bar\tau_1 \lambda^{A_1}_{min}+\bar\sigma_1\lambda^{B_1}_{min}-\lambda^{C_1}_{max}=0.60283>0,$$ so Assumption 1 is satisfied, then $(\bar\tau_1,\bar\sigma_1)$ is in $\calS_a^+$. By Theorem \ref{th:AnalSolu}, we get $(\bar x_1,\bar y_1)=(0.315066, 3.3177)$. Moreover, we have
\[
&\vP(\bar x_1,\bar y_1)=\vP^d(\bar\tau_1,\bar\sigma_1)=-17.1934,\non
\]
so there is no duality gap, then $(\bar x_1,\bar y_1)$ is the global solution of the primal problem, which demonstrates the min-max duality(see Figure \ref{fig:ex1.2}).

For showing the double-max duality of Example 2, we find a local maximum point of $\vP^d(\tau,\sigma)$ in ${S}_a^-$: $(\bar\tau_2,\bar\sigma_2)=(0.151452,-1.68381)$. By Theorem \ref{th:AnalSolu}, we get $(\bar x_2,\bar y_2)=(-0.474364, -0.427002)$. Moreover, we have
\[
&\vP(\bar x_2,\bar y_2)=\vP^d(\bar\tau_2,\bar\sigma_2)=2.98579,\non
\]
and $(\bar x_2,\bar y_2)$ is also a local maximum point of $\vP(x,y)$, which demonstrates the double-max duality(see Figure \ref{fig:ex1.2.max}).

\subsection*{Example 3}
We consider the case that $A_1$ is negative definite. Let $\alpha_1=-4$, $\beta_2=0.5$ and
$$
 A_1=\begin{bmatrix} -1 &  0\\ 0 &  -1.5 \end{bmatrix}, ~~
 B_1=\begin{bmatrix} 2 &  0\\ 0 &  1 \end{bmatrix}, ~~
 C_1=\begin{bmatrix} 2 & 0\\ 0 & 3 \end{bmatrix},\wand
 f=\begin{bmatrix} 5 \\ 2 \end{bmatrix},
$$
then the primal problem:
$$\min_{(x,y)\in\bbR^2}\vP(x,y)=\exp\lpa-0.5x^2-0.75y^2+4\rpa+0.5\lpa x^2+0.5y^2-0.5\rpa^2-x^2-1.5y^2-5x-2y.$$
The corresponding canonical dual function is
$$
\vP^d(\tau,\sigma)=-0.5\lpa\frac{25}{-\tau+2\sigma-2}+\frac{4}{-1.5\tau+\sigma-3}\rpa-\tau\ln(\tau)+5\tau-0.5\sigma^2-0.5\sigma.
$$
In this problem, $\lambda^{A_1}_{min}=-1.5$, $\lambda^{B_1}_{min}=1$, $\lambda^{B_1}_{max}=2$, and $\lambda^{C_1}_{max}=3$. It is noticed that $(\bar\tau_1,\bar\sigma_1)=(0.145563,3.95352)$ is a critical point of the dual function $\vP^d(\tau,\sigma)$(see Figure \ref{fig:ex3pid1}). As $\bar\sigma_1>0$, we have $\bar\lambda^{B_1}=\lambda^{B_1}_{min}$ and $$\Delta=\bar\tau_1 \lambda^{A_1}_{min}+\bar\sigma_1\lambda^{B_1}_{min}-\lambda^{C_1}_{max}=0.7352>0,$$ so Assumption 1 is satisfied, then $(\bar\tau_1,\bar\sigma_1)$ is in $\calS_a^+$. By Theorem \ref{th:AnalSolu}, we get $(\bar x_1,\bar y_1)=(0.867833, 2.72044)$. Moreover, we have
\[
&\vP(\bar x_1,\bar y_1)=\vP^d(\bar\tau_1,\bar\sigma_1)=-13.6736,\non
\]
so there is no duality gap, then $(\bar x_1,\bar y_1)$ is the global solution of the primal problem, which demonstrates the min-max duality(see Figure \ref{fig:ex1.3}).

For showing the double-max duality of Example 3, we find a local maximum point of $\vP^d(\tau,\sigma)$ in ${S}_a^-$: $(\bar\tau_2,\bar\sigma_2)=(54.3685,-0.492123)$. By Theorem \ref{th:AnalSolu}, we get $(\bar x_2,\bar y_2)=(-0.0871798, -0.023517)$. Moreover, we have
\[
&\vP(\bar x_2,\bar y_2)=\vP^d(\bar\tau_2,\bar\sigma_2)=54.9641,\non
\]
and $(\bar x_2,\bar y_2)$ is also a a local maximum point of $\vP(x,y)$, which demonstrates the double-max duality(see Figure \ref{fig:ex1.3.max}).

\subsection*{Example 4}
We also consider the case that $A_1$ is indefinite. Let $\alpha_1=1$, $\beta_1=2$ and
$$
 A_1=\begin{bmatrix} -3 &  0\\ 0 &  1 \end{bmatrix}, ~~
 B_1=\begin{bmatrix} 1 &  0\\ 0 &  1 \end{bmatrix}, ~~
 C_1=\begin{bmatrix} 4 & 0\\ 0 & 4.4 \end{bmatrix},\wand
 f=\begin{bmatrix} 1 \\ 1 \end{bmatrix},
$$
then the primal problem:
$$\min_{(x,y)\in\bbR^2}\vP(x,y)=\exp\lpa-1.5x^2+0.5y^2-1\rpa+0.5\lpa 0.5x^2+0.5y^2-2\rpa^2-2x^2-2.2y^2-x-y.$$
The corresponding canonical dual function is
$$
\vP^d(\tau,\sigma)=-0.5\lpa\frac{1}{-3\tau+\sigma-4}+\frac{1}{\tau+\sigma-4.4}\rpa-\tau\ln(\tau)-0.5\sigma^2-2\sigma.
$$
In this problem, $\lambda^{A_1}_{min}=-3$, $\lambda^{B_1}_{min}=\lambda^{B_1}_{max}=1$, and $\lambda^{C_1}_{max}=4.4$. It is noticed that $(\bar\tau_1,\bar\sigma_1)=(0.0612941,4.67004)$ is a critical point of the dual function $\vP^d(\tau,\sigma)$(see Figure \ref{fig:ex5pid1}). As $\bar\sigma_1>0$, we have $\bar\lambda^{B_1}=\lambda^{B_1}_{min}$ and $$\Delta=\bar\tau_1 \lambda^{A_1}_{min}+\bar\sigma_1\lambda^{B_1}_{min}-\lambda^{C_1}_{max}=0.0862>0,$$ so Assumption 1 is satisfied, then $(\bar\tau_1,\bar\sigma_1)$ is in $\calS_a^+$. By Theorem \ref{th:AnalSolu}, we get $(\bar x_1,\bar y_1)=(2.05695, 3.01812)$. Moreover, we have
\[
&\vP(\bar x_1,\bar y_1)=\vP^d(\bar\tau_1,\bar\sigma_1)=-22.6111,\non
\]
so there is no duality gap, then $(\bar x_1,\bar y_1)$ is the global solution of the primal problem, which demonstrates the min-max duality(see Figure \ref{fig:ex1.5}).

For showing the double-max duality of Example 4, we find a local maximum point of $\vP^d(\tau,\sigma)$ in ${S}_a^-$: $(\bar\tau_2,\bar\sigma_2)=(0.361948,-1.97615)$. By Theorem \ref{th:AnalSolu}, we get $(\bar x_2,\bar y_2)=(-0.141603, -0.166273)$. Moreover, we have
\[
&\vP(\bar x_2,\bar y_2)=\vP^d(\bar\tau_2,\bar\sigma_2)=2.52149,\non
\]
and $(\bar x_2,\bar y_2)$ is also a a local maximum point of $\vP(x,y)$, which demonstrates the double-max duality(see Figure \ref{fig:ex1.5.max}).

For showing the double-min duality of Example 4, we find a local minimum point of $\vP^d(\tau,\sigma)$ in ${S}_a^-$: $(\bar\tau_3,\bar\sigma_3)=(0.149286,3.90584)$. By Theorem \ref{th:AnalSolu}, we get $(\bar x_3,\bar y_3)=(-1.84496, -2.89962)$. Moreover, we have
\[
&\vP(\bar x_3,\bar y_3)=\vP^d(\bar\tau_3,\bar\sigma_3)=-12.7833,\non
\]
and $(\bar x_3,\bar y_3)$ is also a a local minimum point of $\vP(x,y)$, which demonstrates the double-min duality(see Figure \ref{fig:ex1.5.min}).

From above double-min duality in Example 4, we can find our proposed canonical dual method can avoids a local minimum point $(\bar x_3,\bar y_3)$ of the primal problem. In fact, by the canonical dual method, the global solution is obtained, so any local minimum point is avoided. For instance, the point $(0.534285,-2.83131)$ is a local minimum point of the primal problem in Example 2(see Figure \ref{fig:3Dex2localmin}), and the local minimum value is -4.78671, but our proposed canonical dual method obtains the global minimum value -17.1934; the point $(1.29672,-2.09209)$ is a local minimum point of the primal problem in Example 3(see Figure \ref{fig:3Dex3localmin}), and the minimum value is -3.98411, but our proposed canonical dual method obtains the global minimum value -13.6736.

\section{Conclusions and further work}\label{se:concl}
Based on the original definition of objectivity in continuum physics,
 a  canonical d.c. optimization problem is proposed, which can be used to model general nonconvex optimization problems in
 complex systems.
Detailed application is provided  by solving a challenging problem in $\bbR^n$.
By the canonical duality theory, this nonconvex problem is able to reformulated as a concave maximization dual problem in a convex domain.
A detailed proof for the  triality theory is provided  under a reasonable assumption. This theory can be used to identify both global and local extrema, and to develop a powerful algorithm for solving this general d.c. optimization problem.
   Several examples are given to illustrate  detailed situations.
All these examples  support the Assumption \ref{assmp2}. However, we should emphasize  that this assumption is only a sufficient condition  for
the existence of a canonical dual solution in     $\mcal{S}_a^+$.
How to relax this assumption  and to obtain a necessary condition  for   $\mcal{S}_a^+ \neq \emptyset$ are still open questions.
We believe that  this condition should be directly related to the coercivity condition (\ref{eq-coe}) of the  target function $\Pi(x)$
 and deserves  detailed study in the future. \\

\noindent{\bf Acknowledgement}:
We are grateful to  anonymous referees and associate editor for their valuable comments and suggestions.
The research was  supported by
 US Air Force Office of Scientific Research under the grant   AFOSR FA9550-10-1-0487. Dr. Jin Zhong was
supported by  National Natural Science Foundation
of China (no. 11401372), Innovation Program of Shanghai Municipal Education Commission (no. 14YZ114) and Science
\& Technology Commission of Shanghai Municipality (no. 12510501700).

\begin{figure}[H]
\centering
\begin{subfigure}[b]{0.25\textwidth}
\centering
\includegraphics[width=0.75\textwidth]{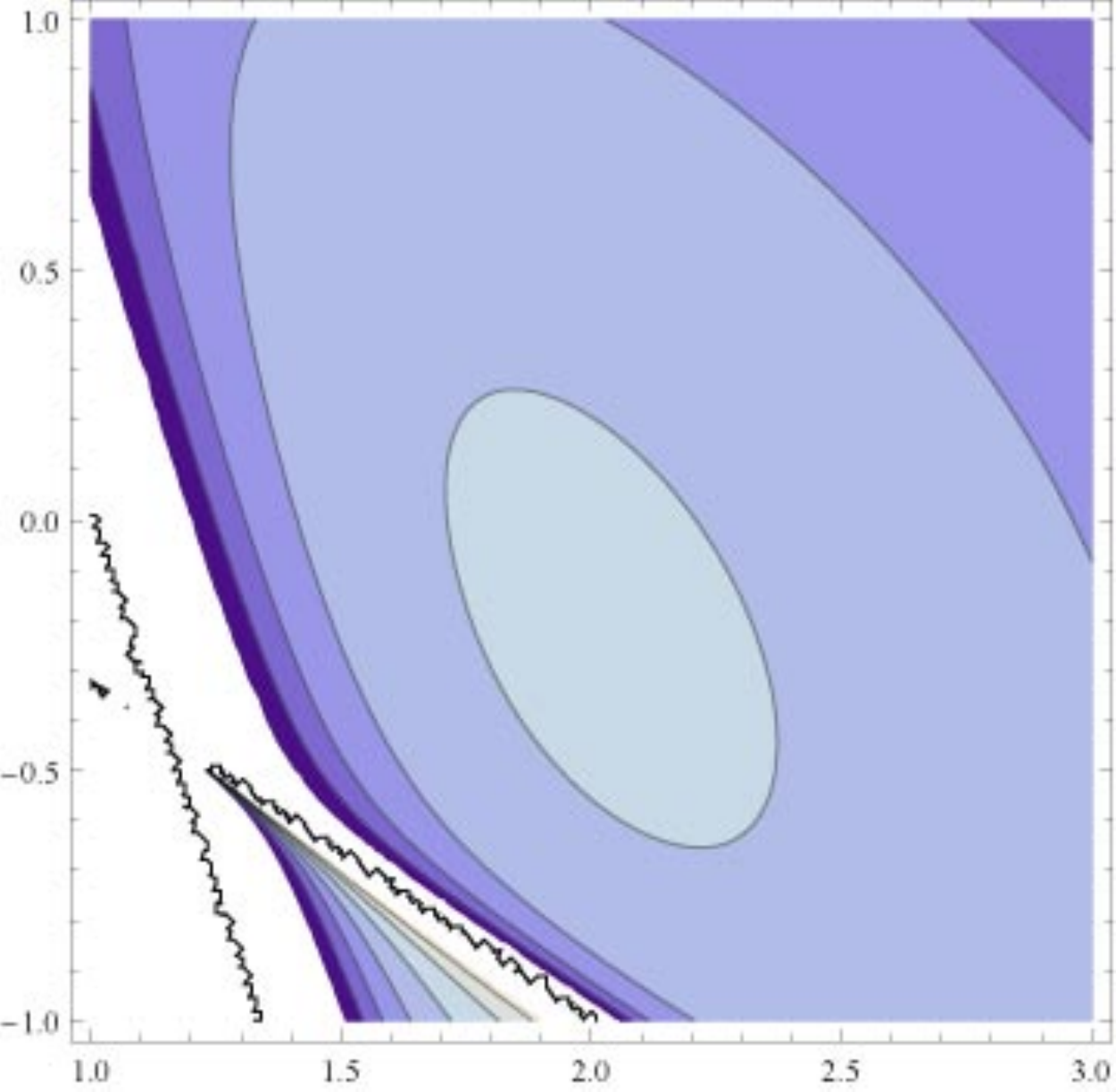}
\caption{}\label{fig:ex1pid1}
\end{subfigure}
\begin{subfigure}[b]{0.25\textwidth}
\centering
\includegraphics[width=0.75\textwidth]{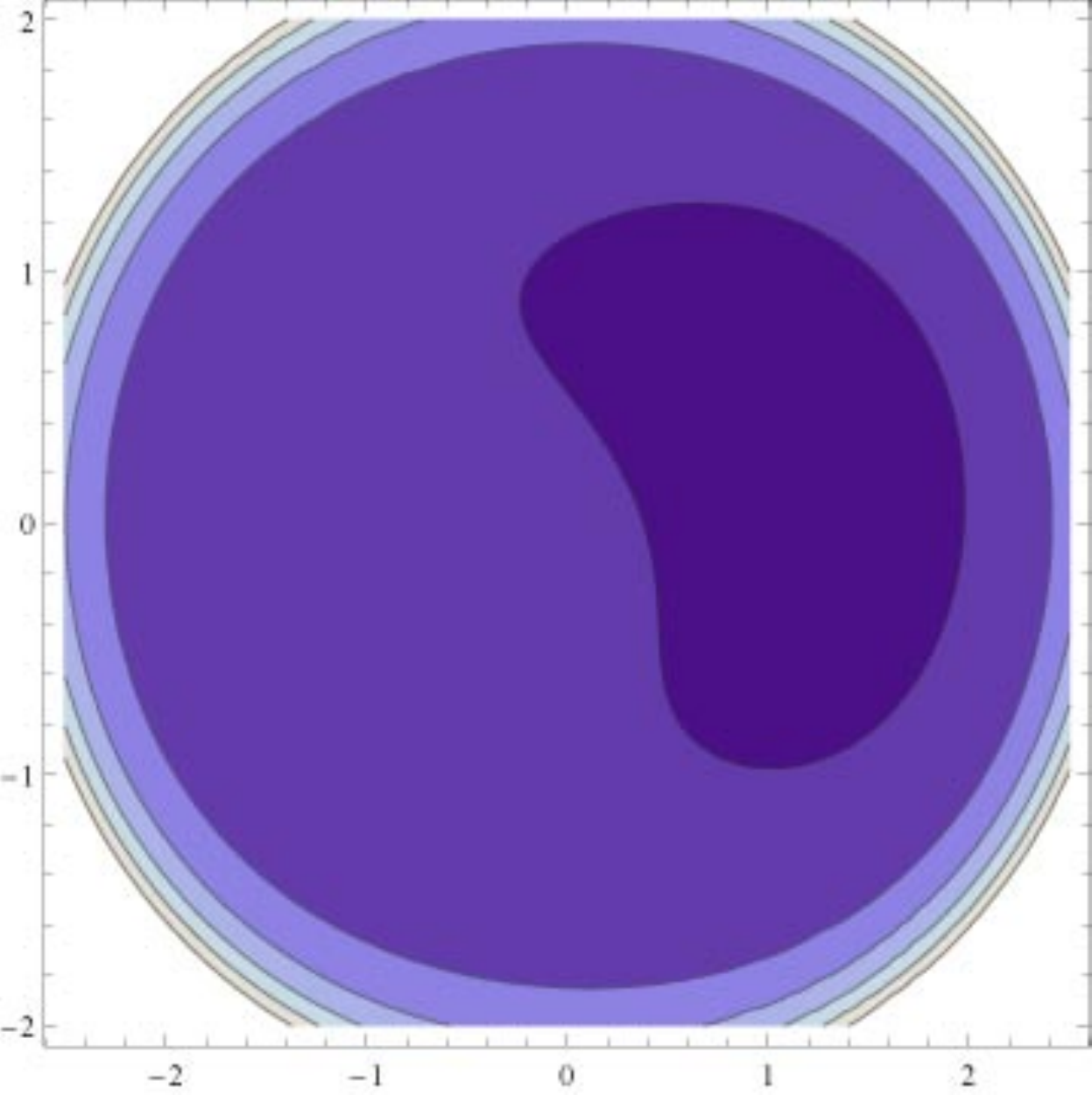}
\caption{}\label{fig:ex1pi1}
\end{subfigure}
\begin{subfigure}[b]{0.4\textwidth}
\centering
\includegraphics[width=1\textwidth]{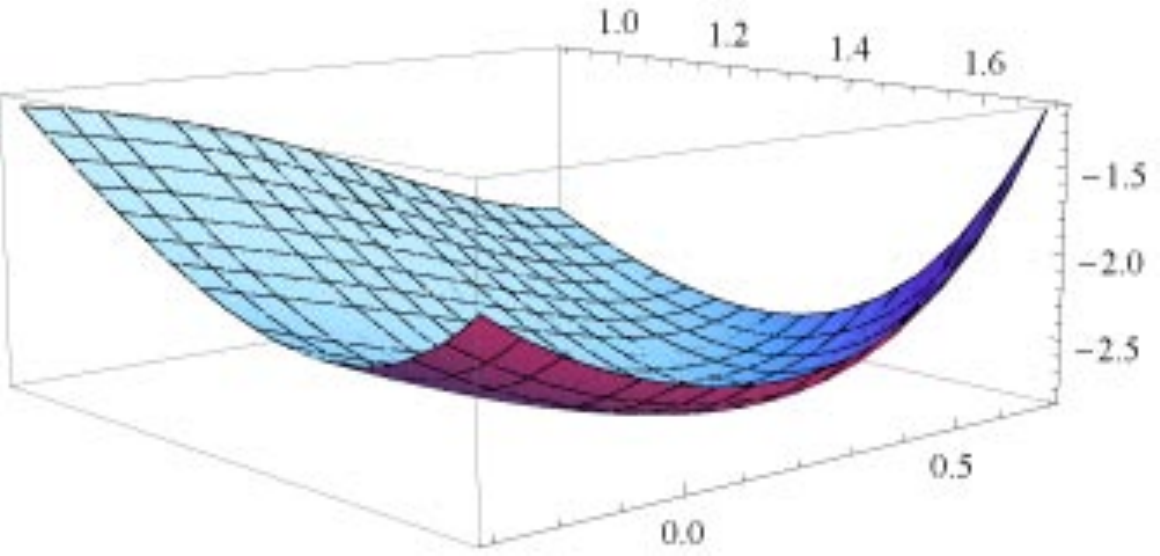}
\caption{}\label{fig:3dexample1pimin}
\end{subfigure}
\caption{The min-max duality in Example 1: (a) contour plot of function $\vP^d(\tau,\sigma)$ near $(\bar\tau_1,\bar\sigma_1)$; (b) contour plot of function $\vP(x,y)$; (c) graph of function $\vP(x,y)$ near $(\bar x_1,\bar y_1)$.}
\label{fig:ex1.1}
\end{figure}

\begin{figure}[H]
\centering
\begin{subfigure}[b]{0.25\textwidth}
\centering
\includegraphics[width=0.75\textwidth]{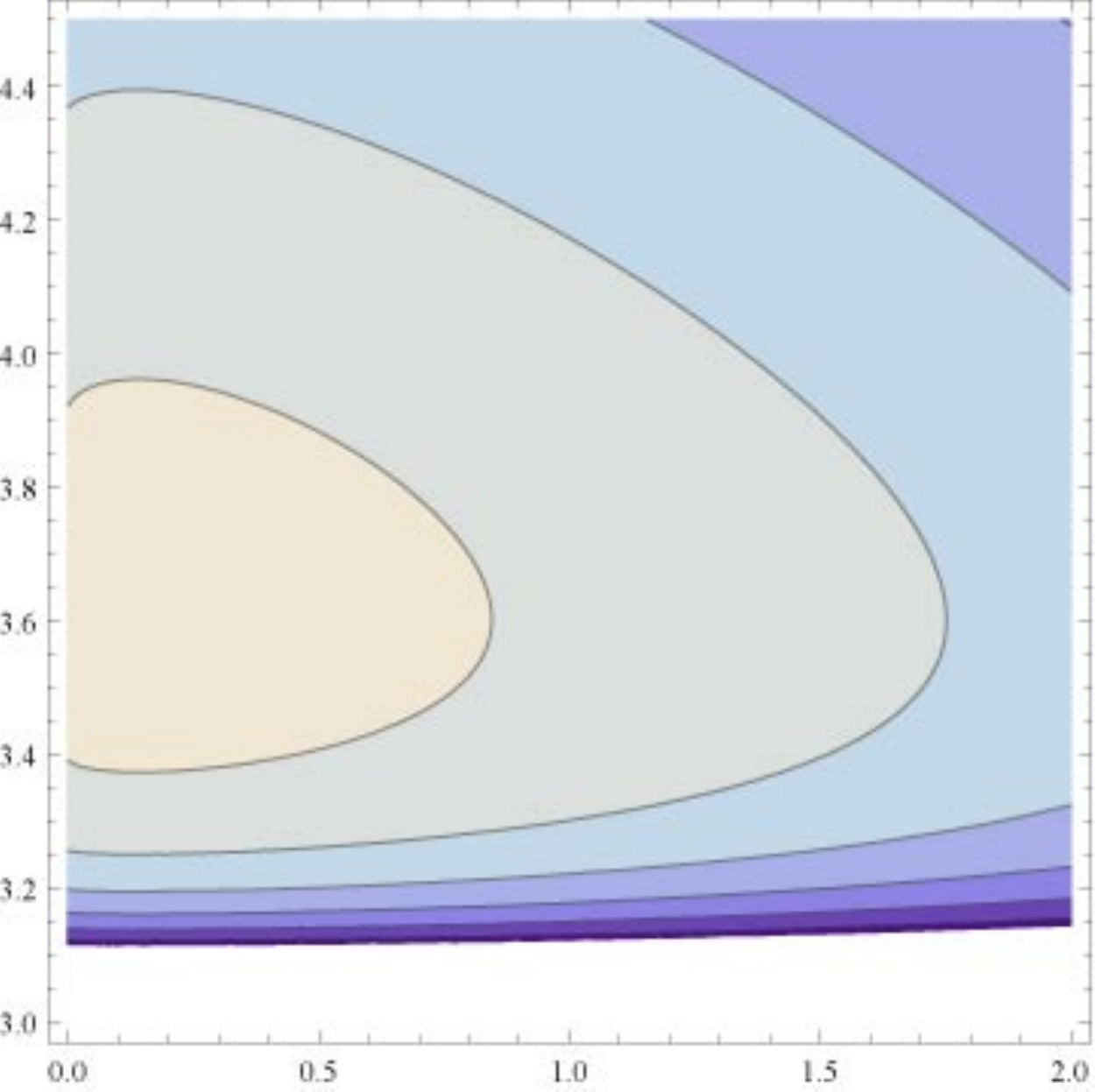}
\caption{}\label{fig:ex2pid1}
\end{subfigure}
\begin{subfigure}[b]{0.25\textwidth}
\centering
\includegraphics[width=0.75\textwidth]{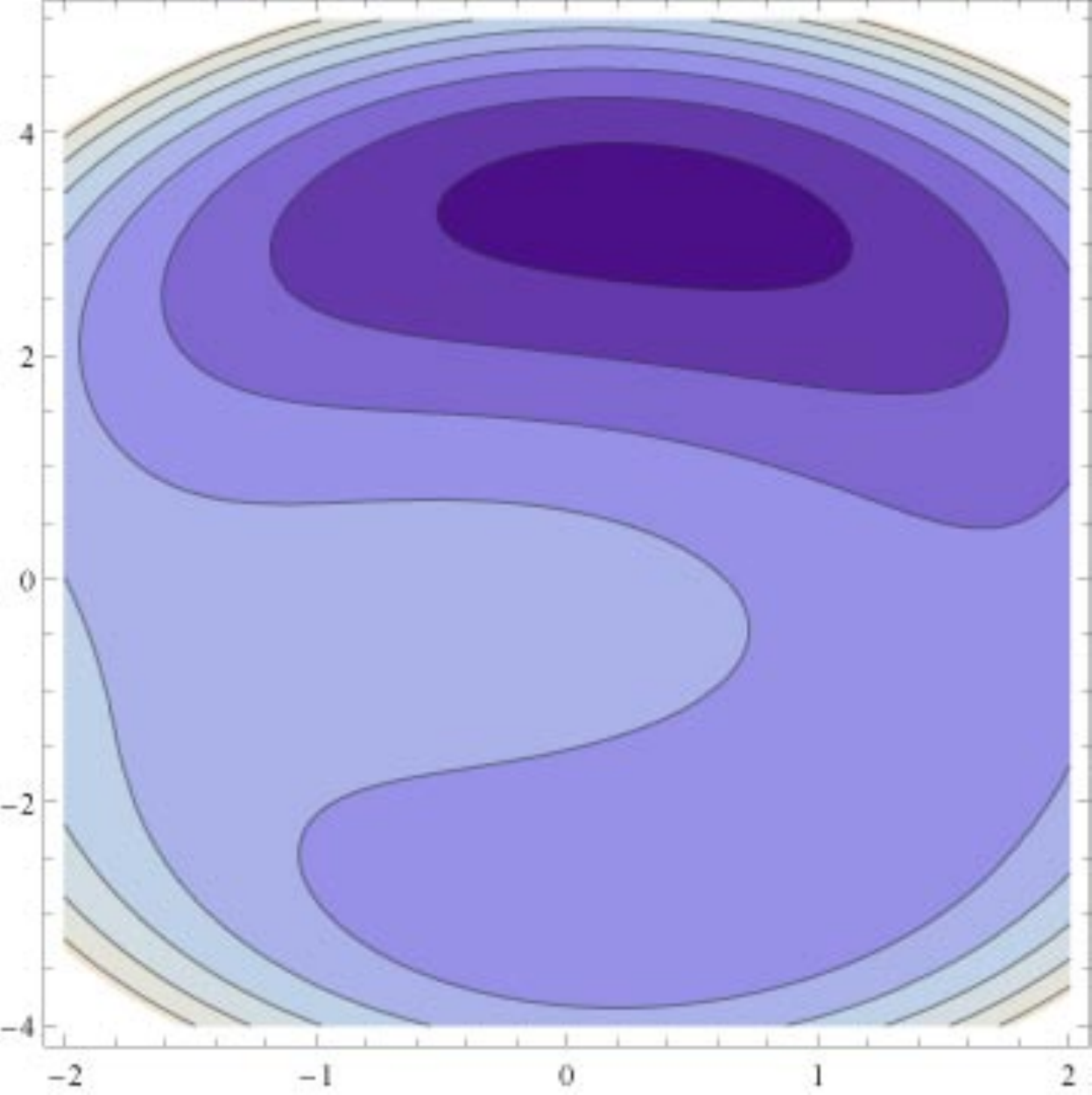}
\caption{}\label{fig:ex2pi2}
\end{subfigure}
\begin{subfigure}[b]{0.4\textwidth}
\centering
\includegraphics[width=1\textwidth]{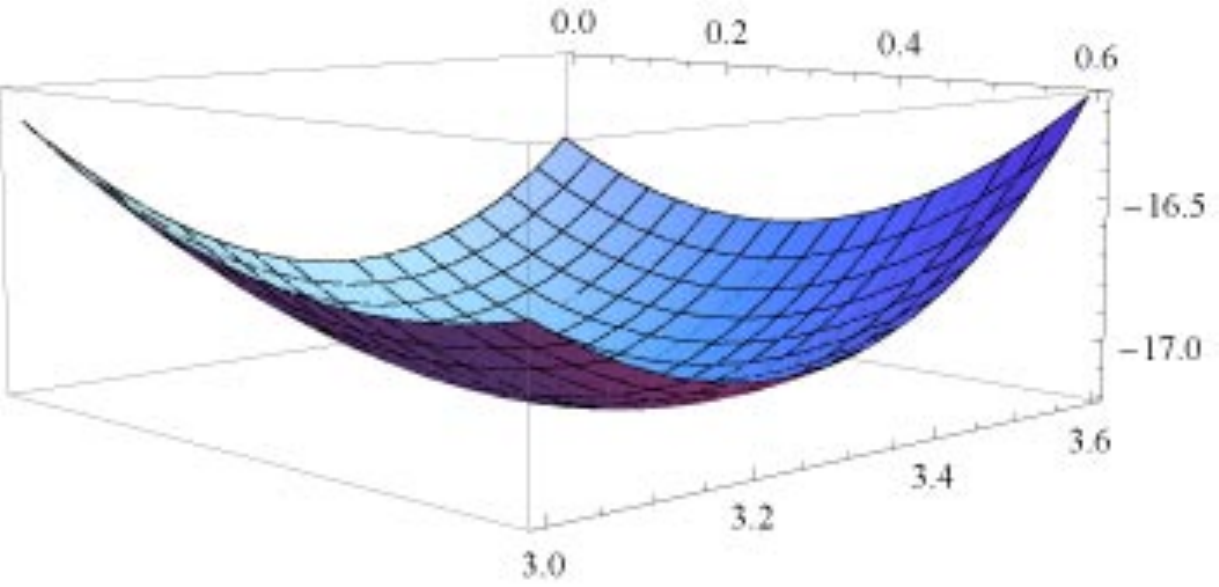}
\caption{}\label{fig:3dexample2pimin}
\end{subfigure}
\caption{The min-max duality in Example 2: (a) contour plot of function $\vP^d(\tau,\sigma)$ near $(\bar\tau_1,\bar\sigma_1)$; (b) contour plot of function $\vP(x,y)$; (c) graph of function $\vP(x,y)$ near $(\bar x_1,\bar y_1)$.}
\label{fig:ex1.2}
\end{figure}

\begin{figure}[H]
\centering
\begin{subfigure}[b]{0.25\textwidth}
\centering
\includegraphics[width=0.75\textwidth]{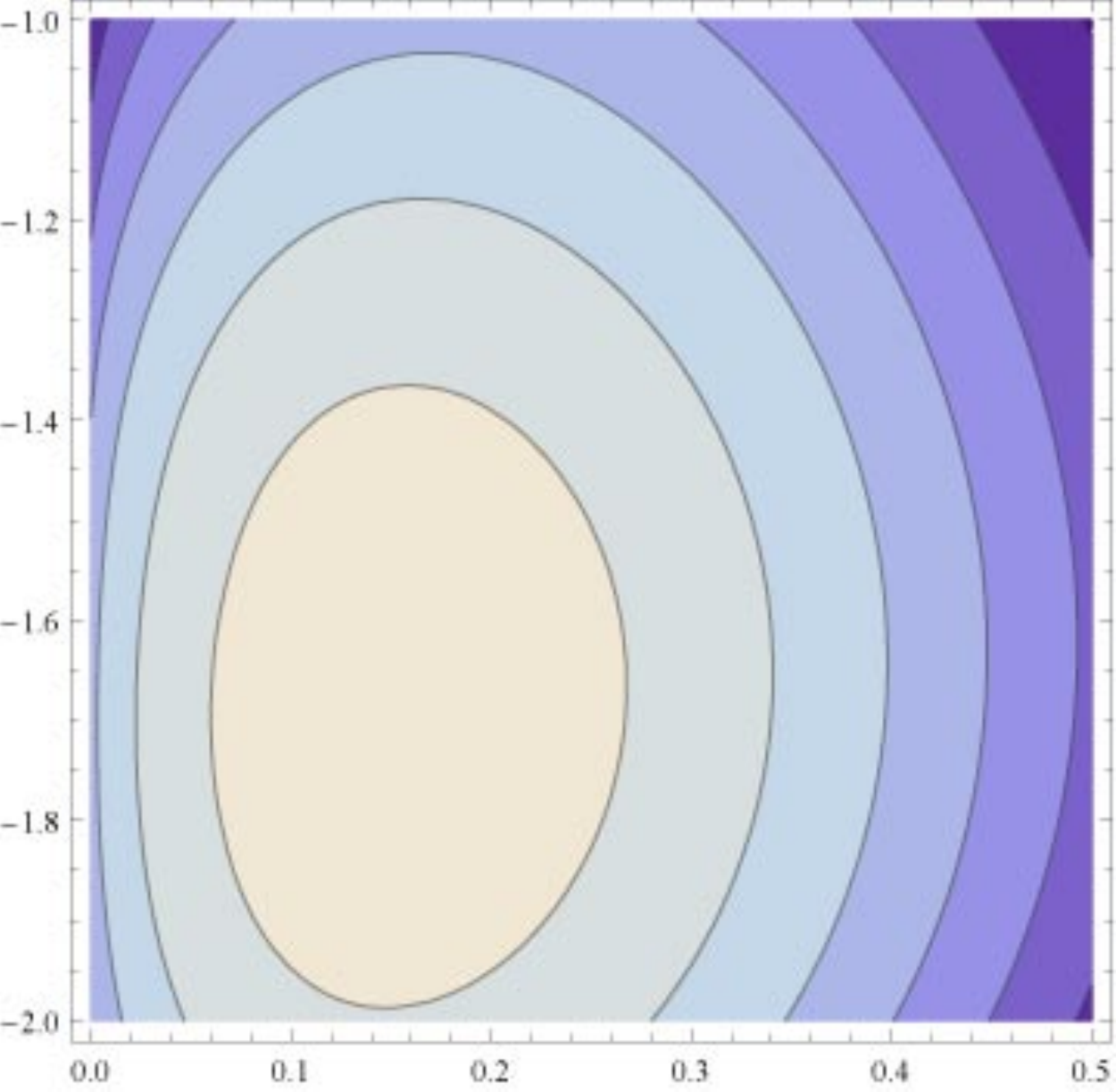}
\caption{}\label{fig:ex2pidmax}
\end{subfigure}
\begin{subfigure}[b]{0.25\textwidth}
\centering
\includegraphics[width=0.75\textwidth]{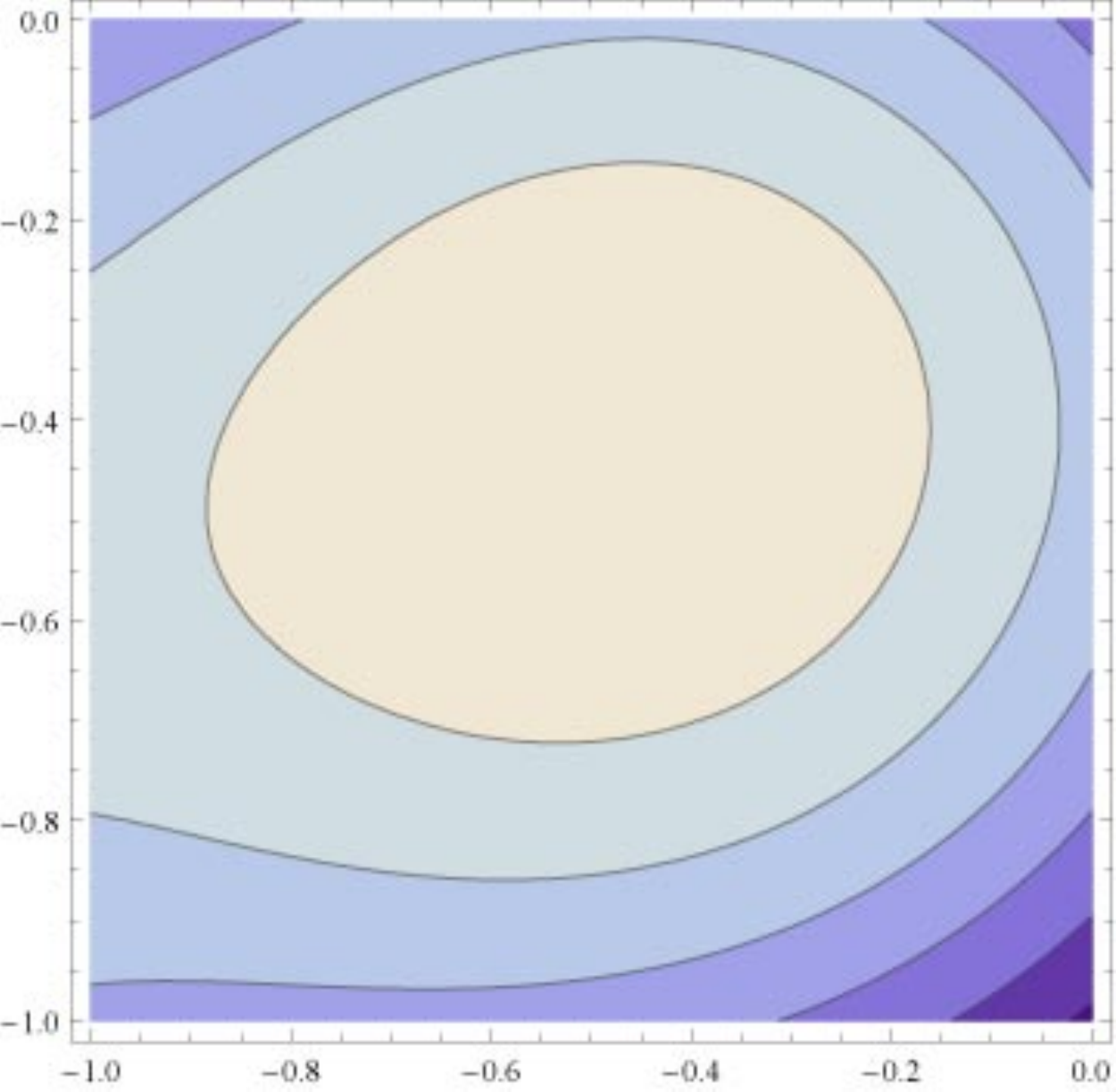}
\caption{}\label{fig:ex2pimax}
\end{subfigure}
\begin{subfigure}[b]{0.4\textwidth}
\centering
\includegraphics[width=0.9\textwidth]{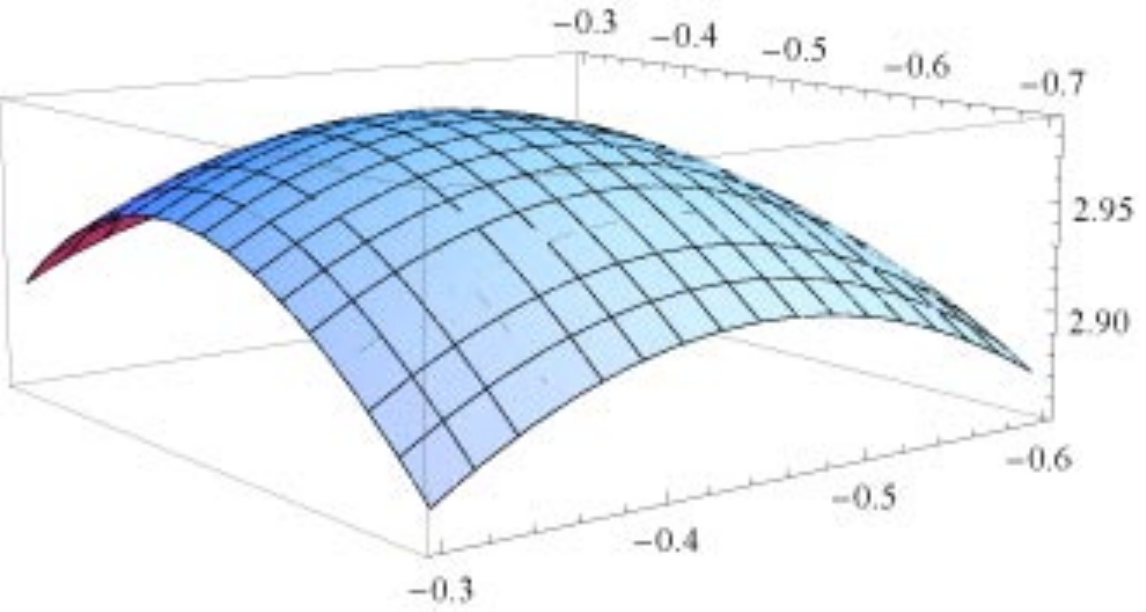}
\caption{}\label{fig:3dexample2pimax}
\end{subfigure}
\caption{The double-max duality in Example 2: (a) contour plot of function $\vP^d(\tau,\sigma)$ near $(\bar\tau_2,\bar\sigma_2)$; (b) contour plot of function $\vP(x,y)$ near $(\bar x_2,\bar y_2)$; (c) graph of function $\vP(x,y)$ near $(\bar x_2,\bar y_2)$.}
\label{fig:ex1.2.max}
\end{figure}

\begin{figure}[H]
\centering
\begin{subfigure}[b]{0.25\textwidth}
\centering
\includegraphics[width=0.75\textwidth]{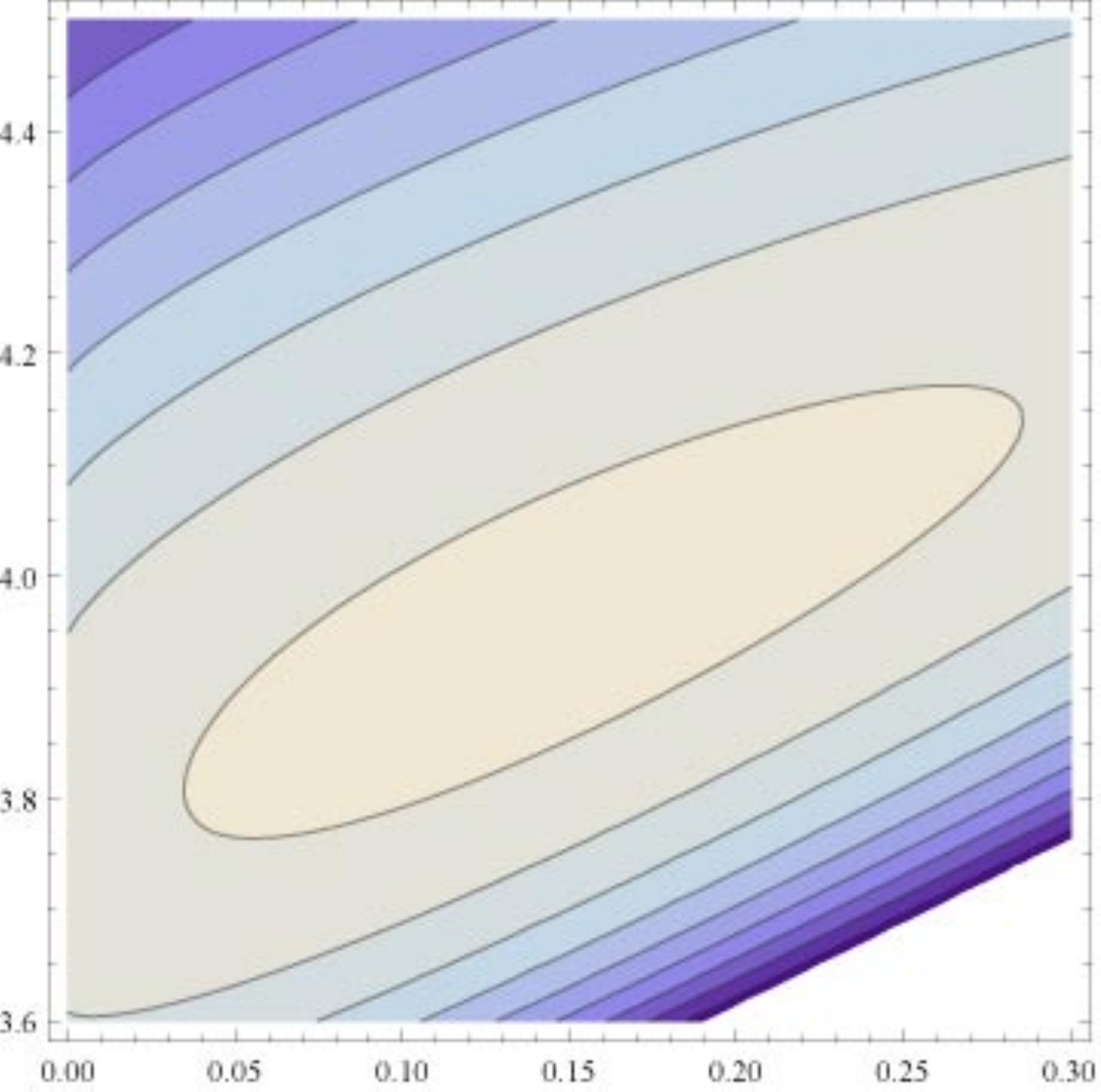}
\caption{}\label{fig:ex3pid1}
\end{subfigure}
\begin{subfigure}[b]{0.25\textwidth}
\centering
\includegraphics[width=0.75\textwidth]{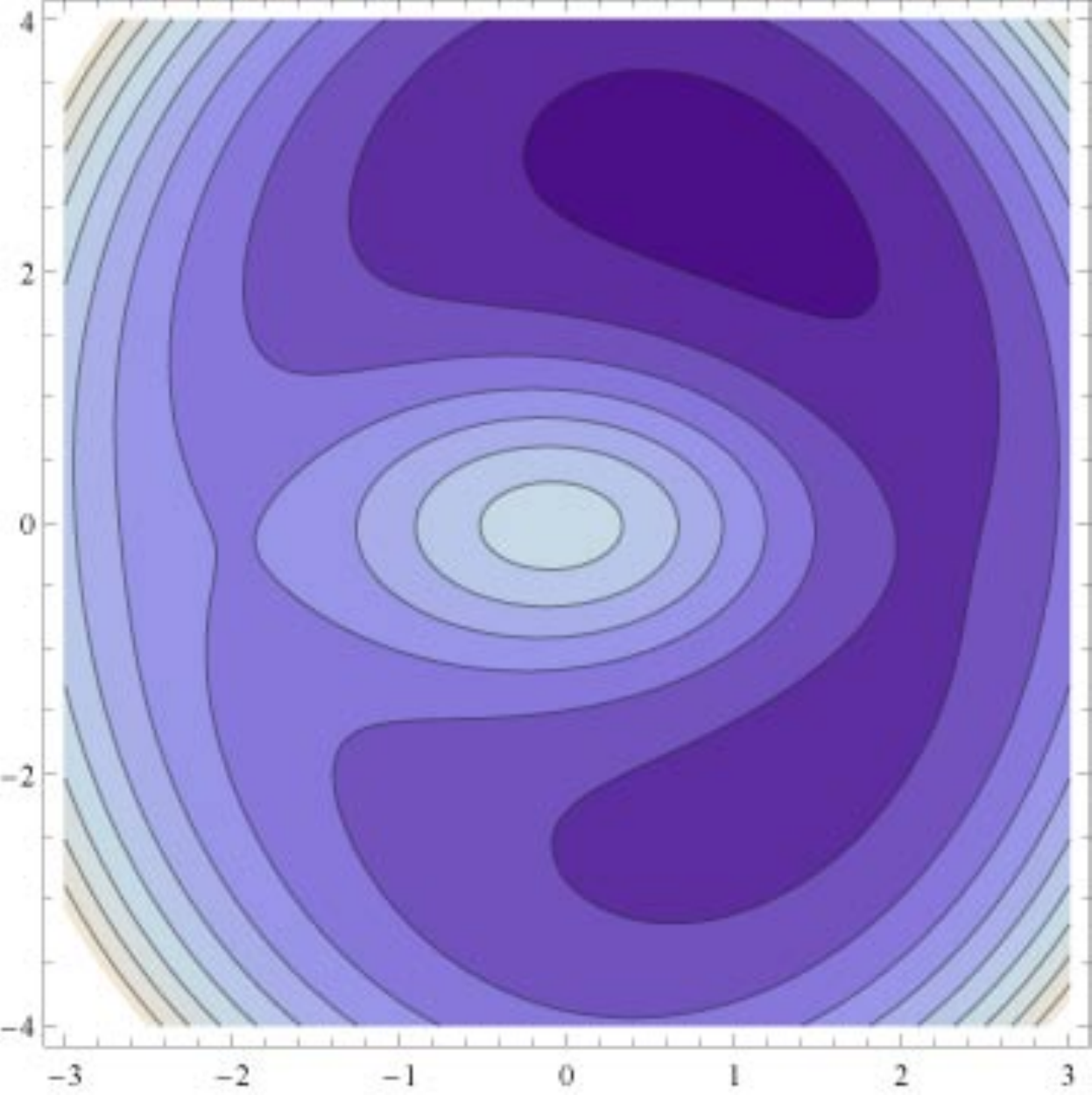}
\caption{}\label{fig:ex3pi2}
\end{subfigure}
\begin{subfigure}[b]{0.4\textwidth}
\centering
\includegraphics[width=1\textwidth]{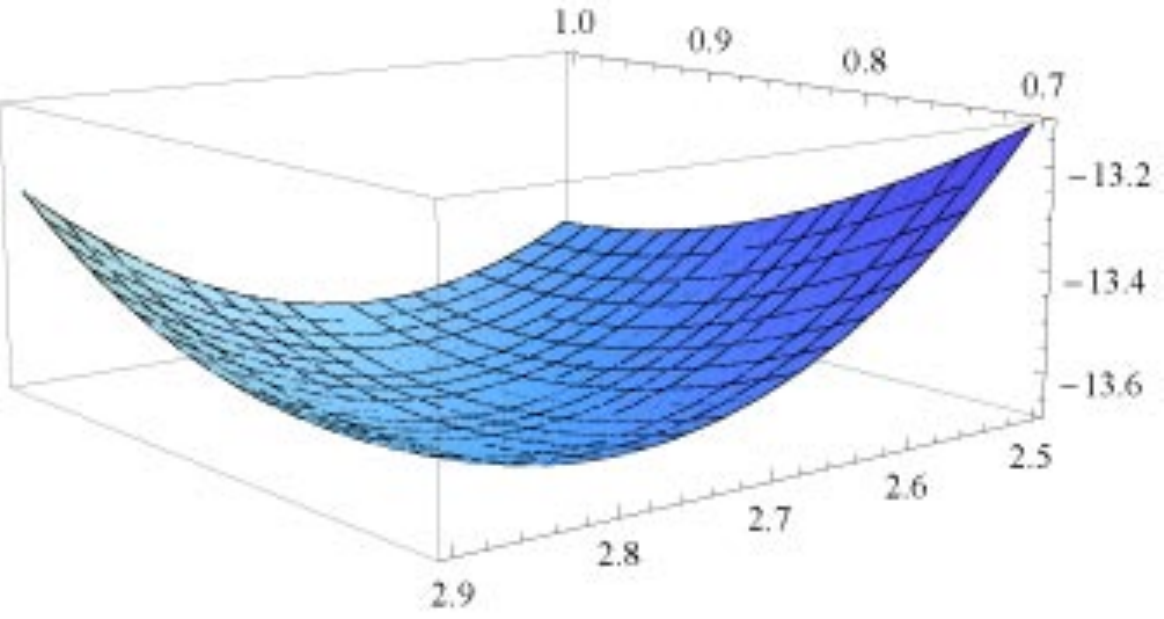}
\caption{}\label{fig:3dexample3pimin}
\end{subfigure}
\caption{The min-max duality in Example 3: (a) contour plot of function $\vP^d(\tau,\sigma)$ near $(\bar\tau_1,\bar\sigma_1)$; (b) contour plot of function $\vP(x,y)$; (c) graph of function $\vP(x,y)$ near $(\bar x_1,\bar y_1)$.}
\label{fig:ex1.3}
\end{figure}

\begin{figure}[H]
\centering
\begin{subfigure}[b]{0.25\textwidth}
\centering
\includegraphics[width=0.75\textwidth]{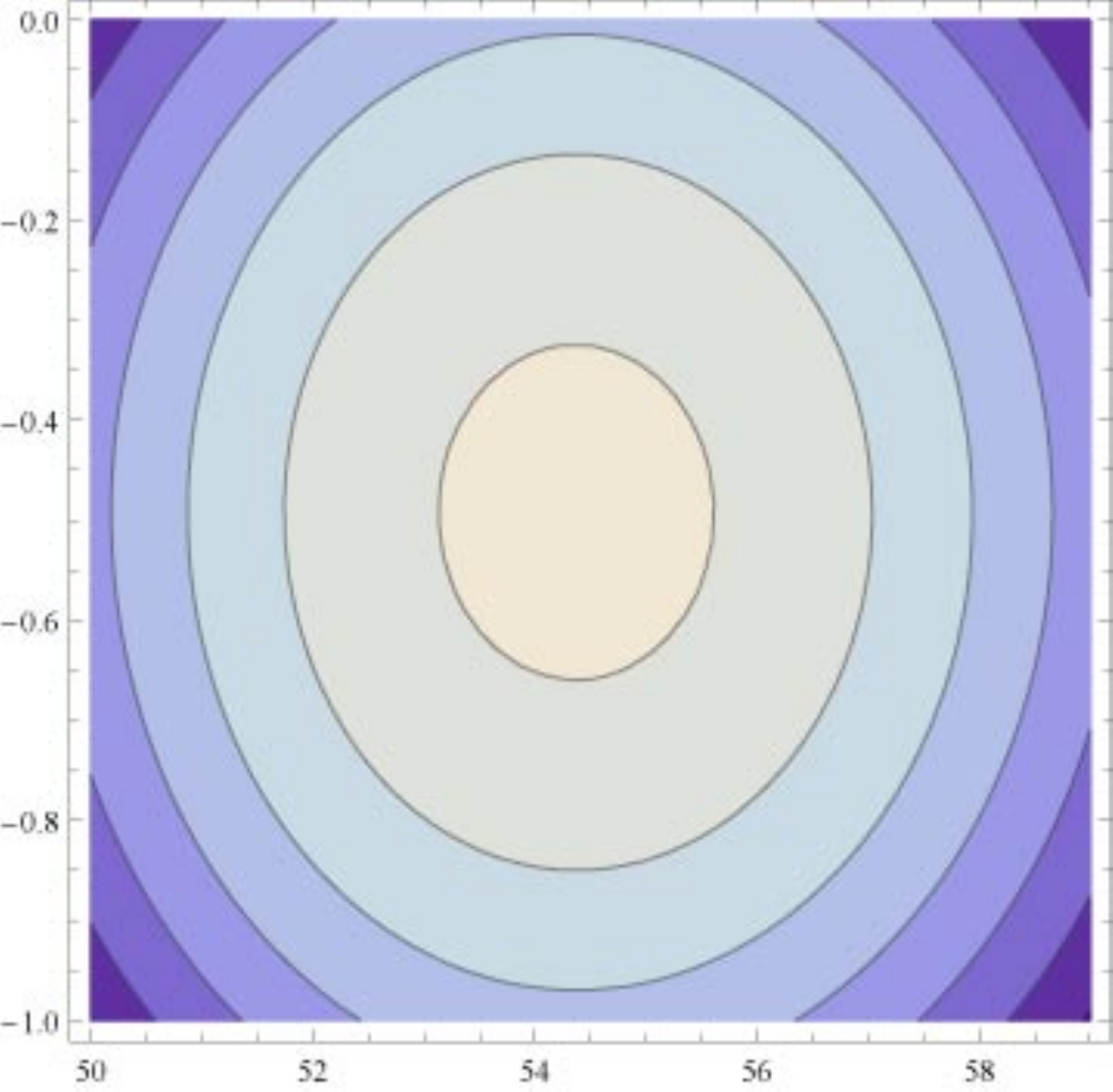}
\caption{}\label{fig:ex3pidmax}
\end{subfigure}
\begin{subfigure}[b]{0.25\textwidth}
\centering
\includegraphics[width=0.75\textwidth]{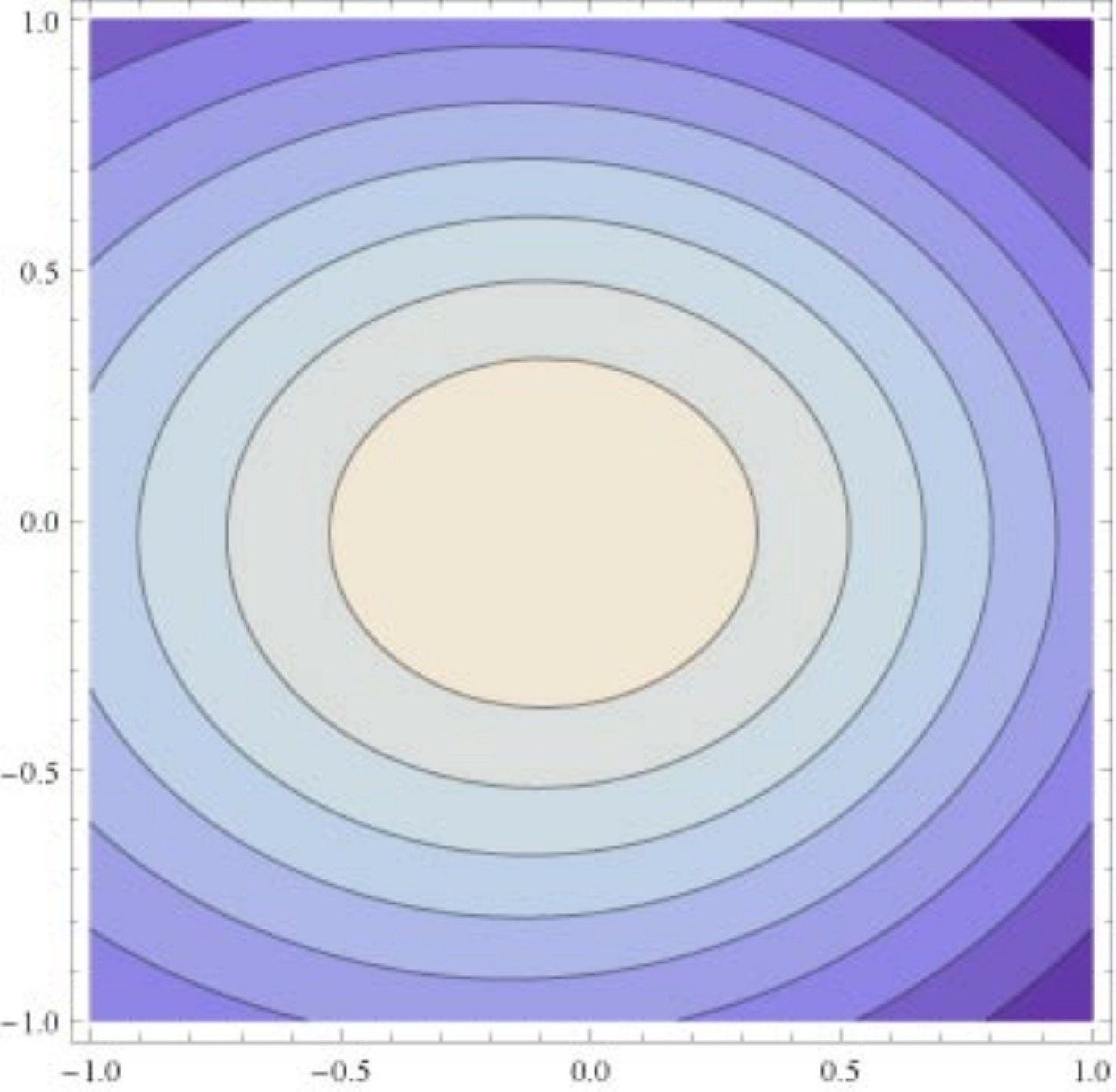}
\caption{}\label{fig:exp3pimax}
\end{subfigure}
\begin{subfigure}[b]{0.4\textwidth}
\centering
\includegraphics[width=0.9\textwidth]{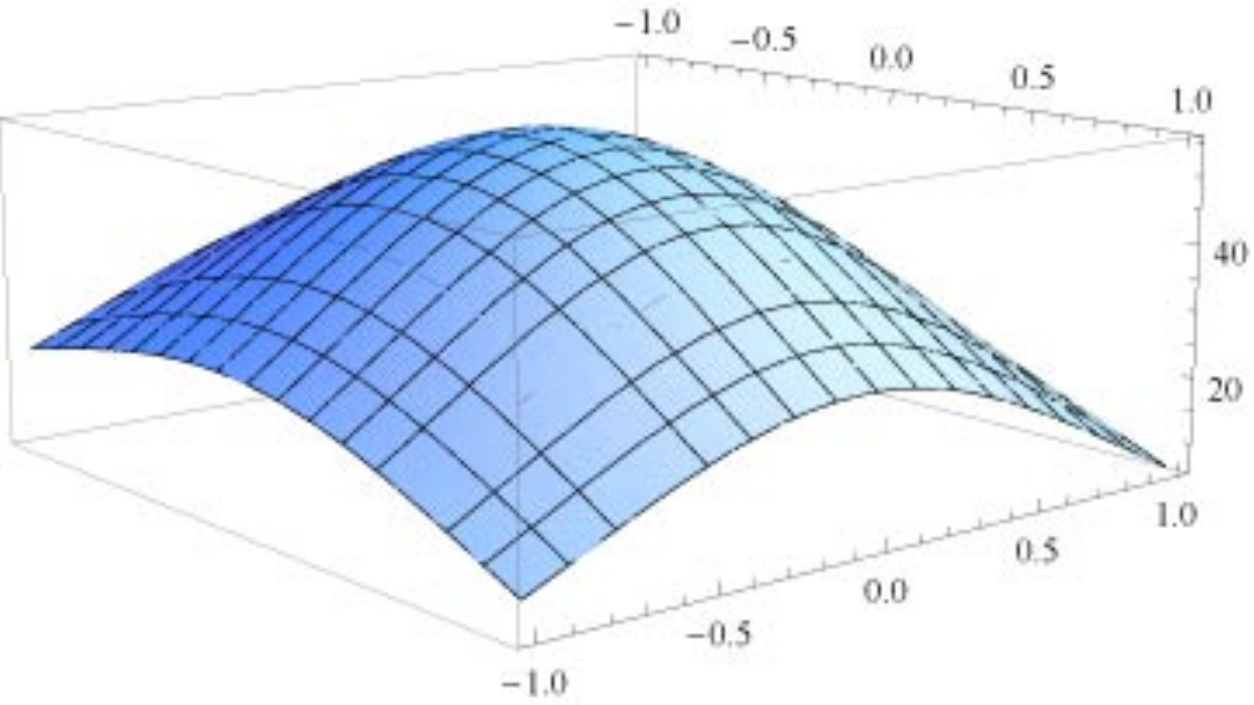}
\caption{}\label{fig:3dexample3pimax}
\end{subfigure}
\caption{The double-max duality in Example 3: (a) contour plot of function $\vP^d(\tau,\sigma)$ near $(\bar\tau_2,\bar\sigma_2)$; (b) contour plot of function $\vP(x,y)$ near $(\bar x_2,\bar y_2)$; (c) graph of function $\vP(x,y)$ near $(\bar x_2,\bar y_2)$.}
\label{fig:ex1.3.max}
\end{figure}

\begin{figure}[H]
\centering
\begin{subfigure}[b]{0.25\textwidth}
\centering
\includegraphics[width=0.75\textwidth]{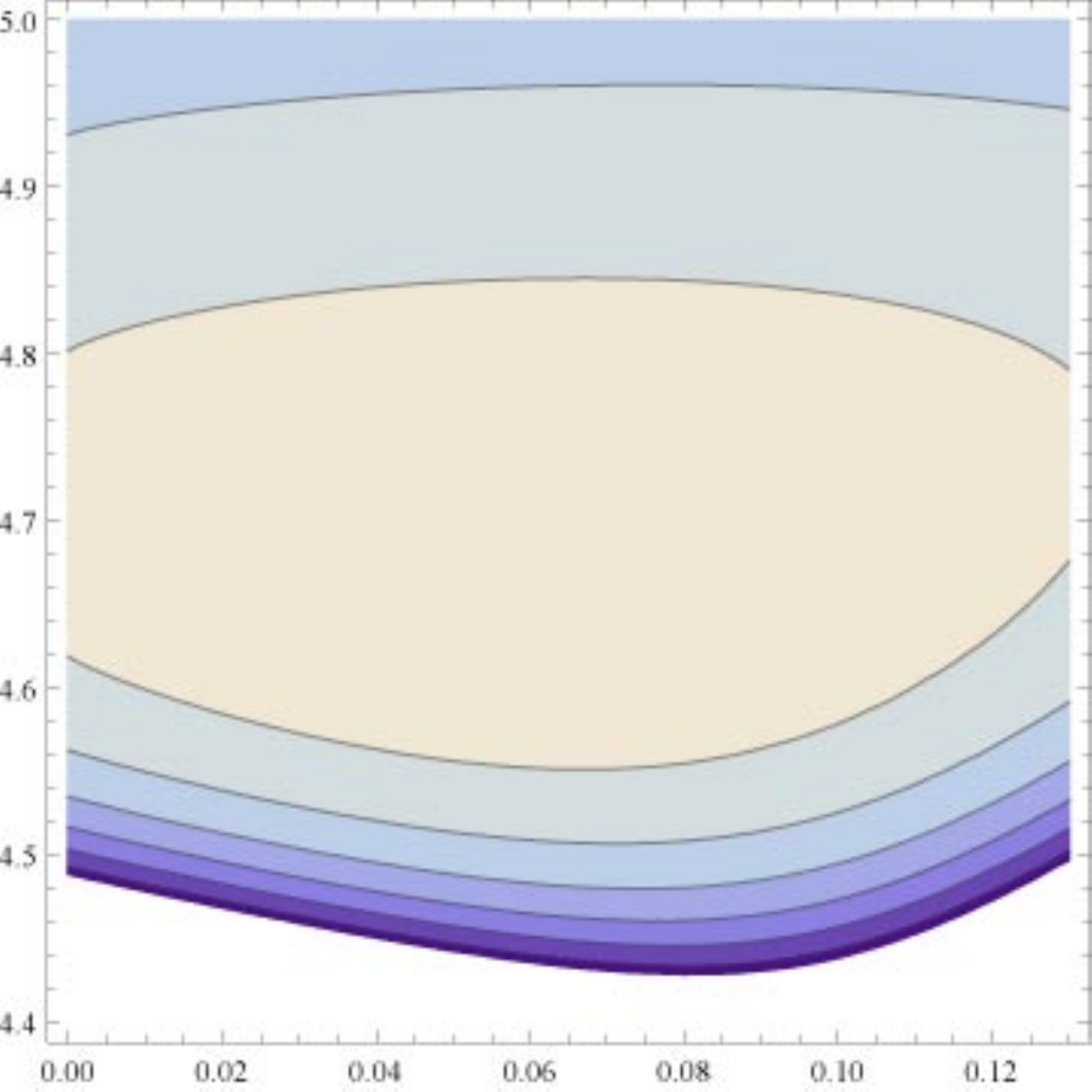}
\caption{}\label{fig:ex5pid1}
\end{subfigure}
\begin{subfigure}[b]{0.25\textwidth}
\centering
\includegraphics[width=0.75\textwidth]{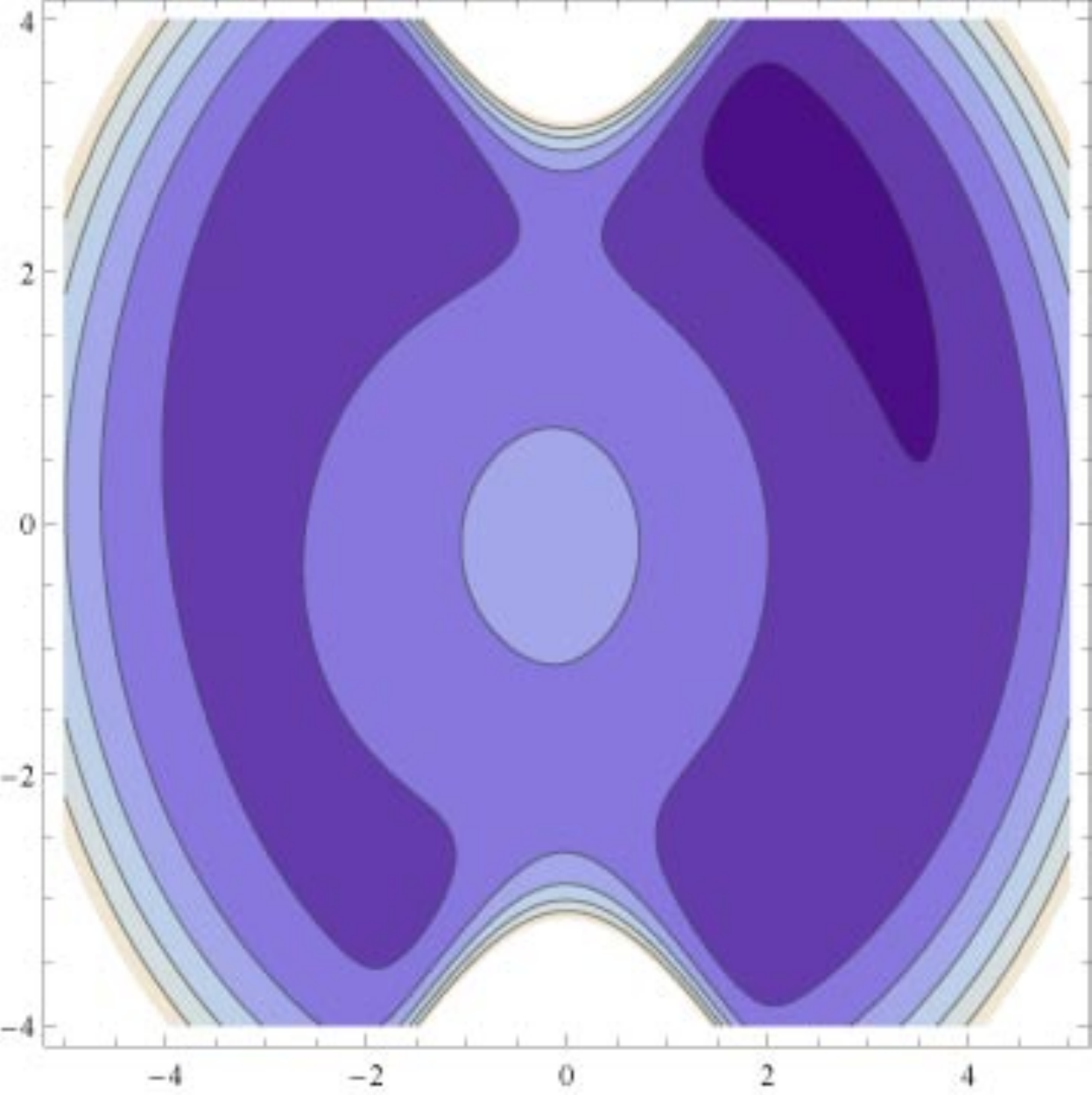}
\caption{}\label{fig:ex5pi2}
\end{subfigure}
\begin{subfigure}[b]{0.4\textwidth}
\centering
\includegraphics[width=0.9\textwidth]{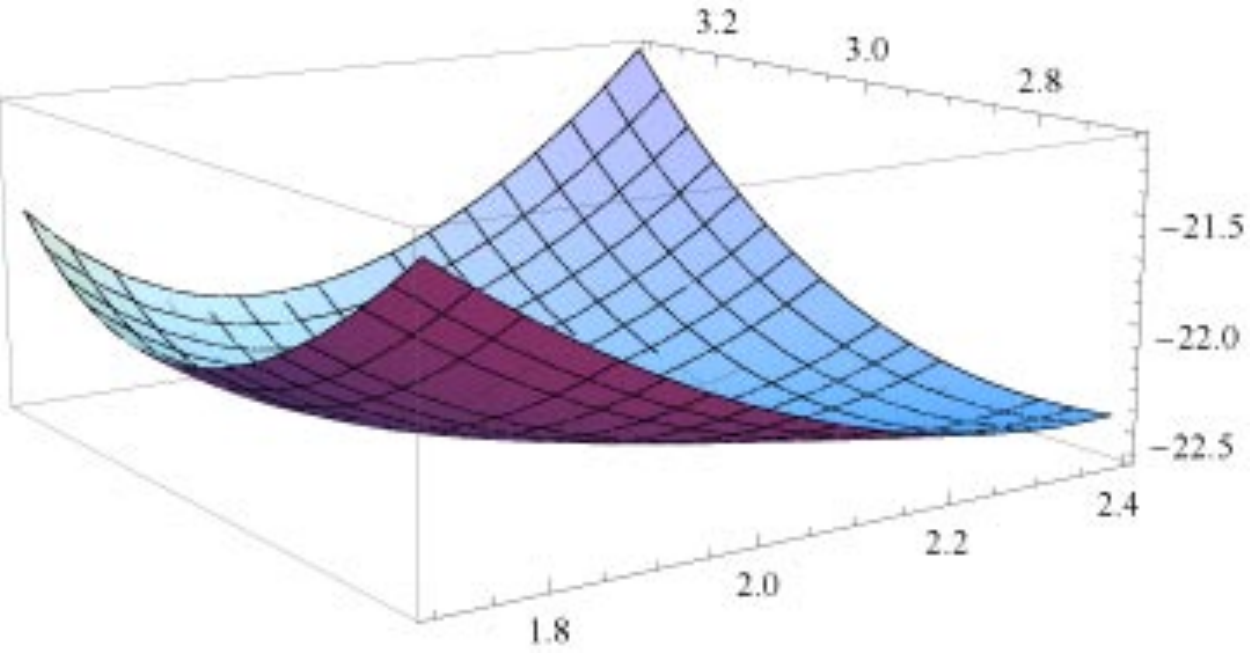}
\caption{}\label{fig:3dexample5pimin}
\end{subfigure}
\caption{The min-max duality in Example 4: (a) contour plot of function $\vP^d(\tau,\sigma)$ near $(\bar\tau_1,\bar\sigma_1)$; (b) contour plot of function $\vP(x,y)$; (c) graph of function $\vP(x,y)$ near $(\bar x_1,\bar y_1)$.}
\label{fig:ex1.5}
\end{figure}

\begin{figure}[H]
\centering
\begin{subfigure}[b]{0.25\textwidth}
\centering
\includegraphics[width=0.75\textwidth]{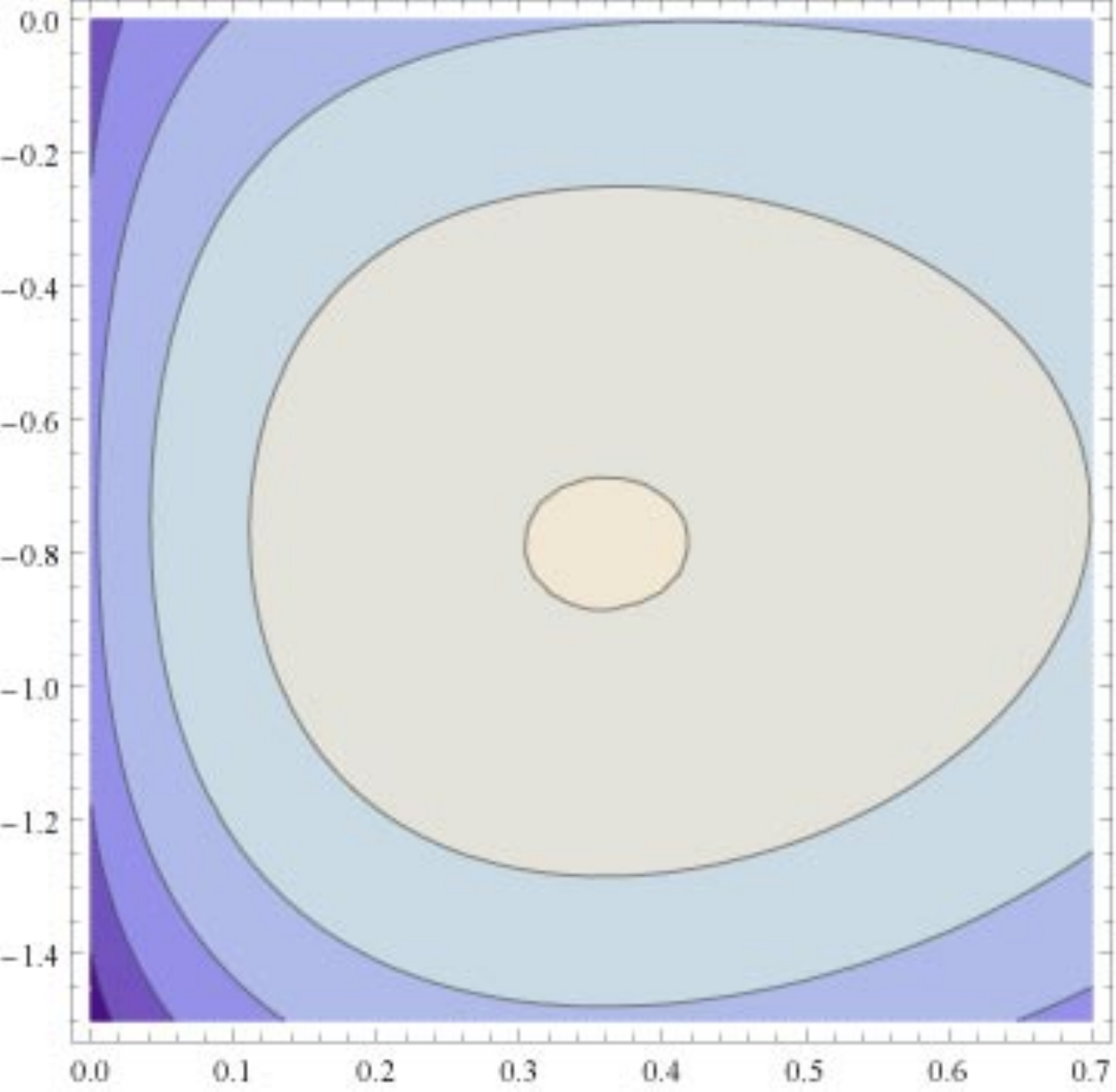}
\caption{}\label{fig:ex5pidmax}
\end{subfigure}
\begin{subfigure}[b]{0.25\textwidth}
\centering
\includegraphics[width=0.75\textwidth]{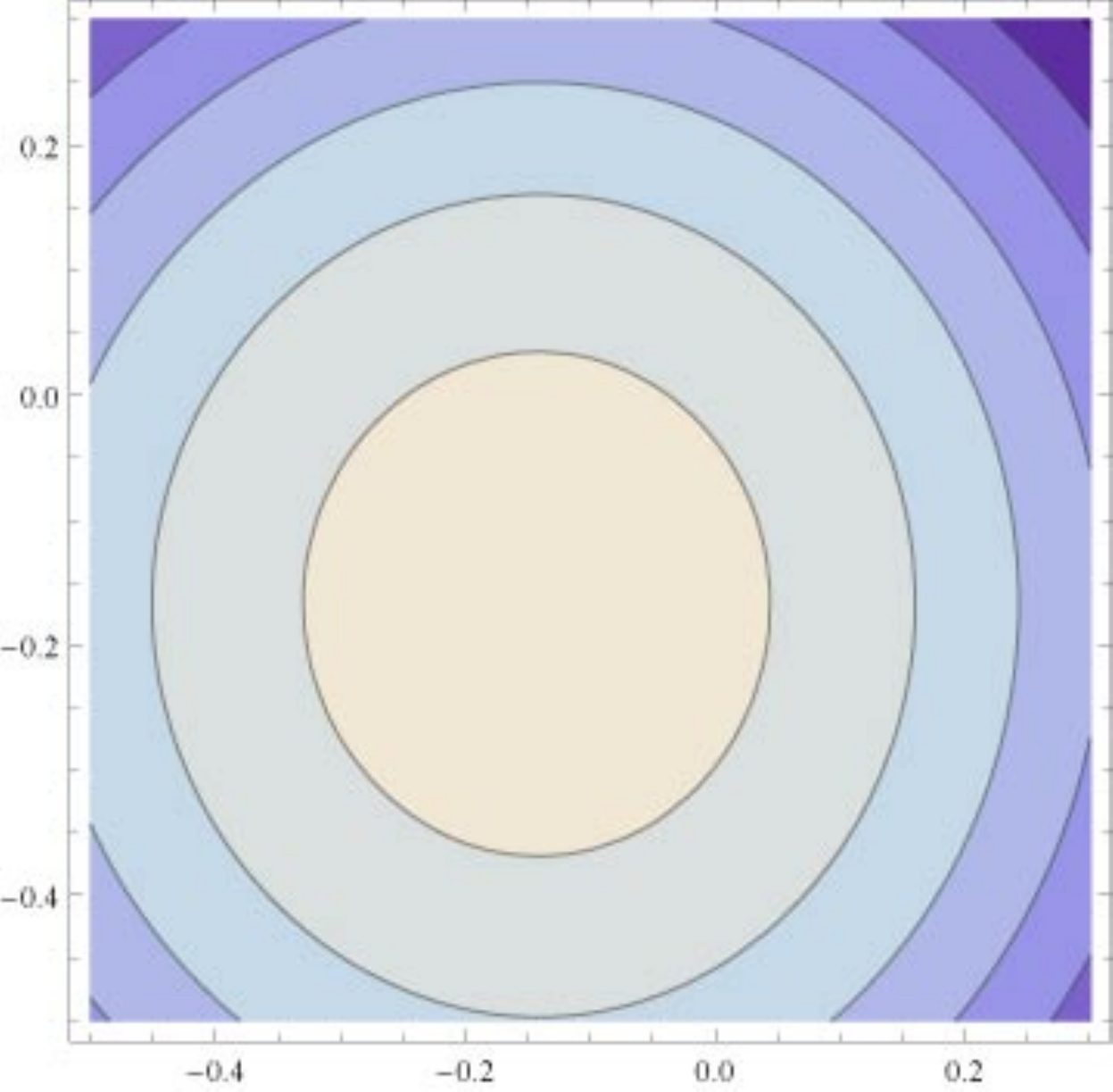}
\caption{}\label{fig:ex5pimax}
\end{subfigure}
\begin{subfigure}[b]{0.4\textwidth}
\centering
\includegraphics[width=0.9\textwidth]{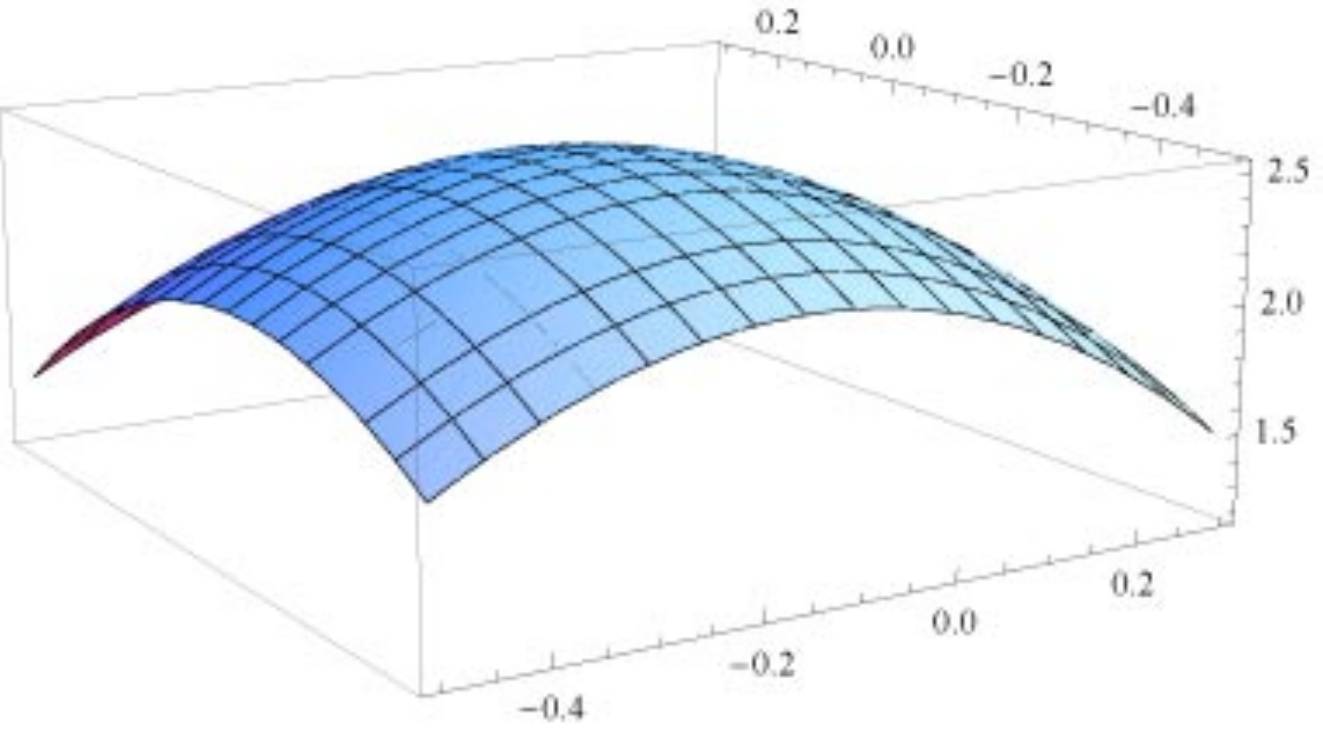}
\caption{}\label{fig:3dexample5pimax}
\end{subfigure}
\caption{The double-max duality in Example 4: (a) contour plot of function $\vP^d(\tau,\sigma)$ near $(\bar\tau_2,\bar\sigma_2)$; (b) contour plot of function $\vP(x,y)$ near $(\bar x_2,\bar y_2)$; (c) graph of function $\vP(x,y)$ near $(\bar x_2,\bar y_2)$.}
\label{fig:ex1.5.max}
\end{figure}

\begin{figure}[H]
\centering
\begin{subfigure}[b]{0.25\textwidth}
\centering
\includegraphics[width=0.75\textwidth]{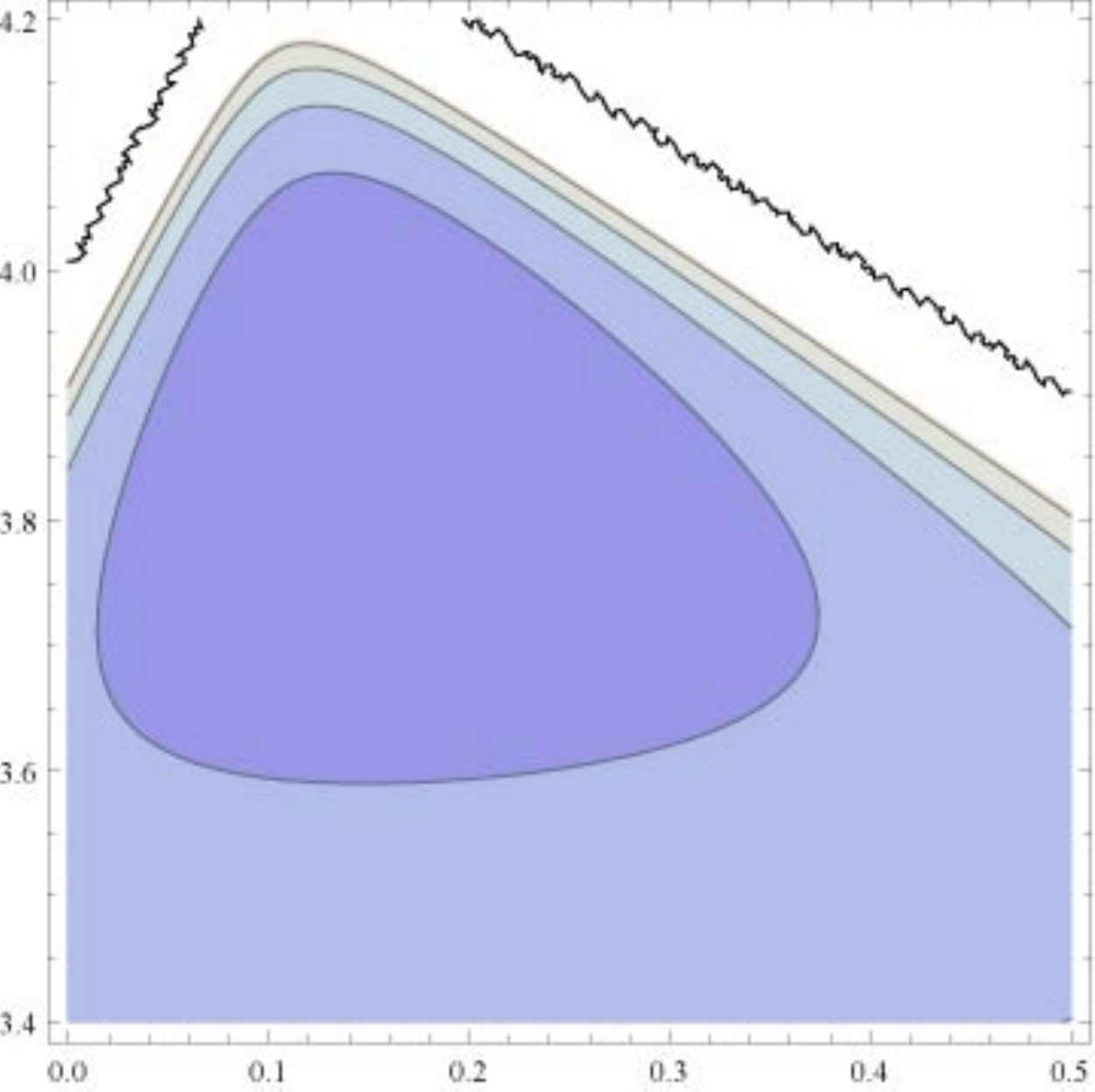}
\caption{}\label{fig:ex5pidmin}
\end{subfigure}
\begin{subfigure}[b]{0.25\textwidth}
\centering
\includegraphics[width=0.75\textwidth]{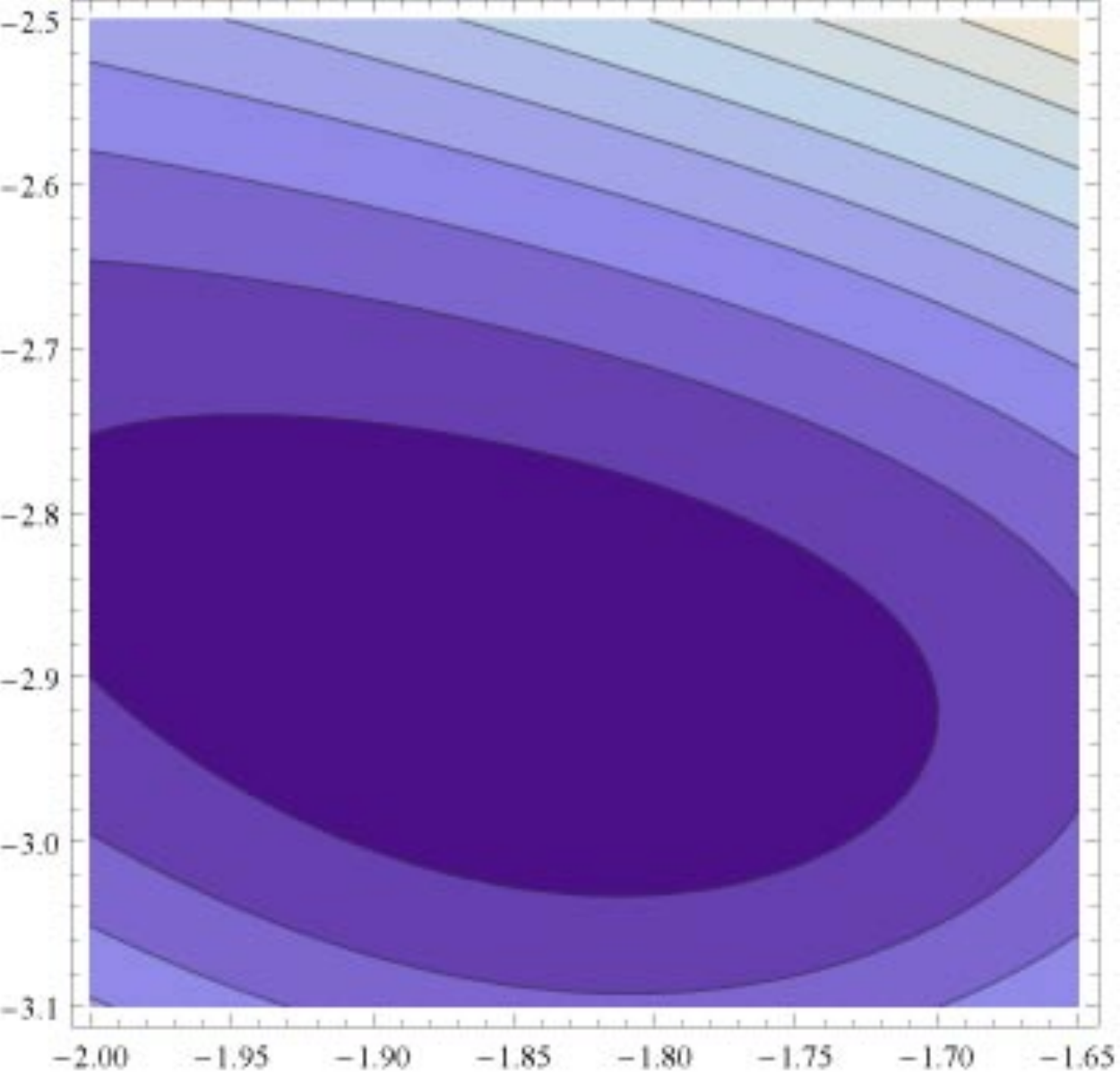}
\caption{}\label{fig:ex5pimin}
\end{subfigure}
\begin{subfigure}[b]{0.4\textwidth}
\centering
\includegraphics[width=0.9\textwidth]{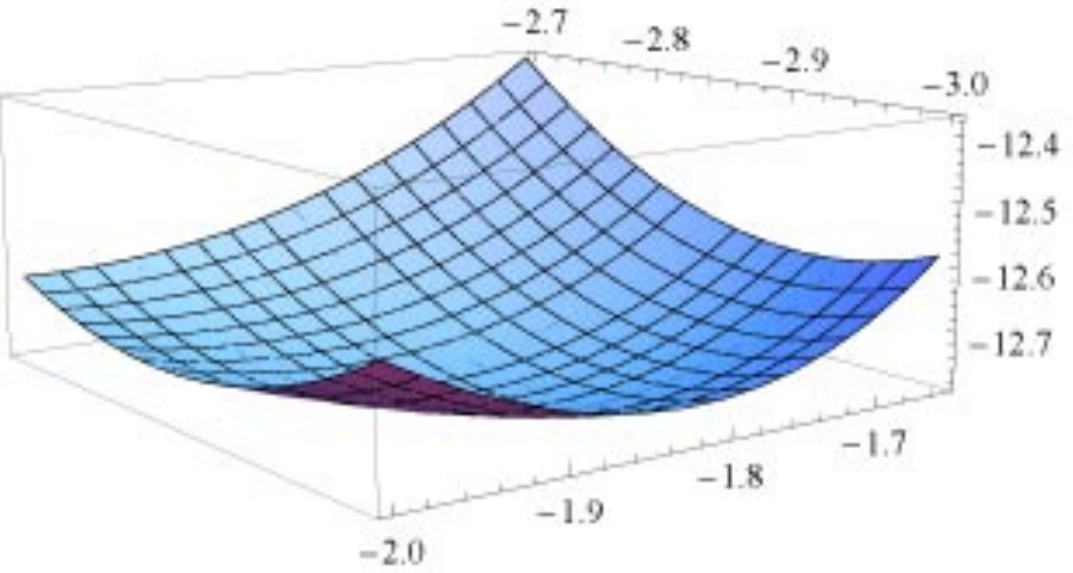}
\caption{}\label{fig:3dexample5pilocalmin}
\end{subfigure}
\caption{The double-min duality in Example 4: (a) contour plot of function $\vP^d(\tau,\sigma)$ near $(\bar\tau_3,\bar\sigma_3)$; (b) contour plot of function $\vP(x,y)$ near $(\bar x_3,\bar y_3)$; (c) graph of function $\vP(x,y)$ near $(\bar x_3,\bar y_3)$.}
\label{fig:ex1.5.min}
\end{figure}

\begin{figure}[H]
\centering
\begin{subfigure}[b]{0.3\textwidth}
\centering
\includegraphics[width=1.1\textwidth]{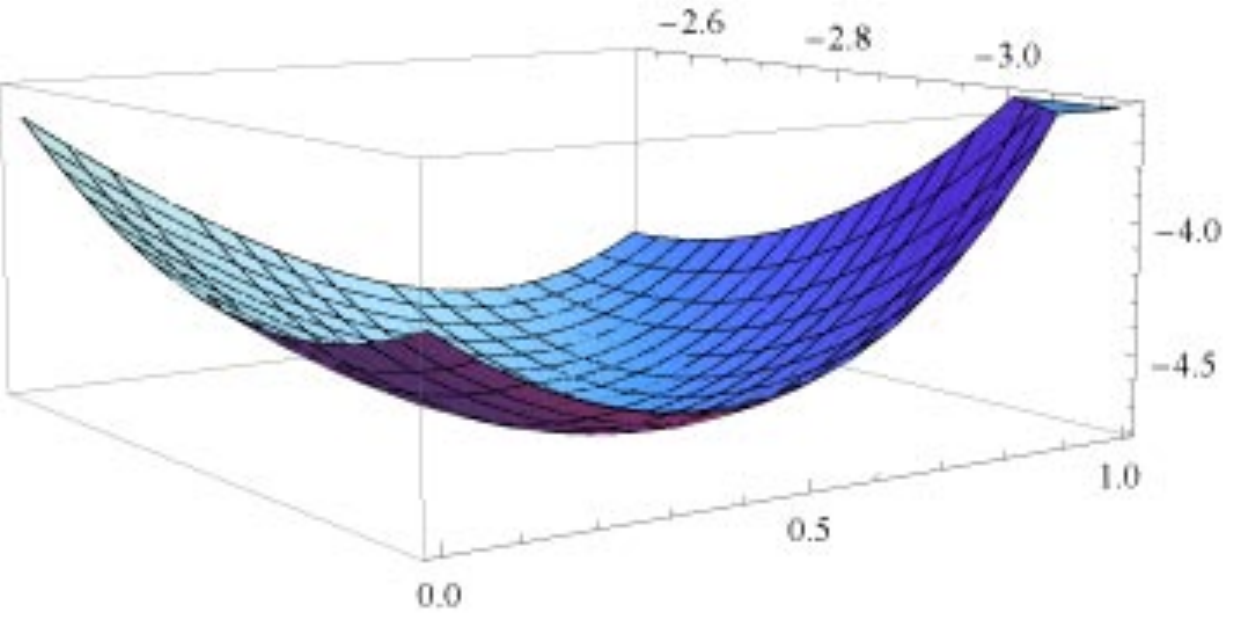}
\caption{}\label{fig:3Dex2localmin}
\end{subfigure}
\hspace{0.5in}
\begin{subfigure}[b]{0.3\textwidth}
\centering
\includegraphics[width=1.1\textwidth]{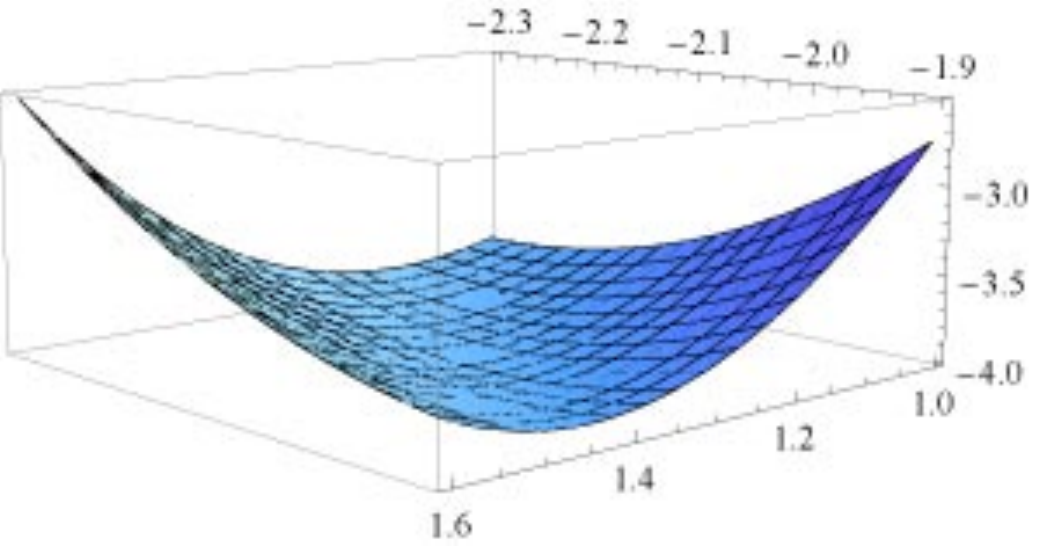}
\caption{}\label{fig:3Dex3localmin}
\end{subfigure}
\caption{graph of the primal problem near a local minimum point: (a) in Example 2; (b)in Example 3.}
\label{fig:localmin}
\end{figure}
\end{document}